\newtheorem{thm}{Theorem}[section]
\newtheorem{lem}{Lemma}[section]
\newtheorem{col}{Corollary}[section]
\newtheorem{prob}{Problem}[section]
\numberwithin{equation}{section}
\begin{document}

\title{Fej\'{e}r-type positive operator based on Takenaka--Malmquist system on unit circle}

\author{F.G. Abdullayev}
\address{Kyrgyz-Turkish Manas University, Bishkek, Kyrgyz Republic; Mersin University, Turkey}
\email{fahreddin.abdullayev@manas.edu.kg}
\thanks{Research supported by the Kyrgyz-Turkish Manas University (Bishkek/Kyrgyz Republic), Project No. KTMU-BAP-20I9.FBE,06.}

\author{ V.V. Savchuk}
\address{Institute of Mathematics of NAS of Ukraine, Kyiv, Ukraine}
\email{savchuk@imath.kiev.ua}

\subjclass[2020]{Primary 30C10, 30E10; Secondary 41A20, 41A36, 42C05}



\keywords{Holomorphic functions, Takenaka--Malmquist system, Fej\'{e}r type operator, Blaschke product, Frostman condition.}

\begin{abstract}
	Let $\varphi=\{\varphi_k\}_{k=-\infty}^\infty$ denote the extended Takenaka--Malmquist system on unit circle $\mathbb T$ and let $\sigma_{n,\varphi}(f),$ $f\in L^1(\mathbb T)$, be the Fej\'er-type operator based on $\varphi$, introduced by V. N. Rusak.  We  give the convergence criteria for $\sigma_{n,\varphi}(f)$ in Banach space $X(\mathbb T):=L^p(\mathbb T)\vee C(\mathbb T)$, $p\ge 1$.   Also we prove the Voronovskaya-type theorem for $\sigma_{n,\varphi}(f)$ on class of holomorphic functions representable by Cauchy-type integrals with bounded densities.
\end{abstract}
	
\maketitle	
		
Keywords and Phrases: {\it Holomorphic functions, Takenaka--Malmquist system, Fej\'{e}r type operator, Blaschke product, Frostman condition.}
	
2000 MS Classification: {\it 30C10, 30E10, 41A20, 41A36, 42C05}

\bigskip 

\section{Introduction}

Let $C(\mathbb T)$ denote a Banach space of continuous functions on the unit circle $\mathbb T:=\{z\in\mathbb C : |z|=1\}$ equipped with the norm $\|f\|_{C(\mathbb T)}=\max_{t\in\mathbb T}|f(t)|.$

Let a function $f\in C(\mathbb T)$ and let
\[
f(\mathrm e^{\mathrm ix})\sim\sum_{k\in\mathbb Z}c_k\mathrm e^{\mathrm ikx}
\]
be its trigonometric Fourier series. The Ces\`{a}ro $(C,1)$ means of $f$ is defined by
\begin{equation}\label{Cesaro means}
\sigma_0(f)=f~\mbox{and}~\sigma_n(f)(\mathrm e^{\mathrm ix}):=\frac{1}{n}\sum_{k=1}^nS_k(f)(\mathrm e^{\mathrm ix}),~\mbox{if}~n=1,2,\ldots,
\end{equation}
where $S_k(f)(z):=\sum_{j=-k+1}^{k-1}c_j\mathrm e^{\mathrm ijx}$. The trigonometric polynomials $\sigma_n(f)$ also are called the Fej\'{e}r means of $f$.

The famous L. Fej\'{e}r theorem says:

\textit{\\$(i)$ for each $n=0,1,\ldots$, $\sigma_n(f)\ge 0$ on $\mathbb T$ if $f\ge0$ on $\mathbb T$;\\	
$(ii)$ for each $n=0,1,\ldots$ and $f\in C(\mathbb T)$, $\|\sigma_n(f)\|_{C(\mathbb T)}\le \|f\|_{C(\mathbb T)}$;\\
$(iii)$ if $f\in C(\mathbb T)$, then $\sigma_n(f)$ converge uniformly  to $f$ on $\mathbb T$ as $n\to\infty$.}

The classical proof is based on the representation of $\sigma_n(f)$ by convolution 
\begin{eqnarray}\label{Fejer int}
\sigma_n(f)(\mathrm e^{\mathrm ix})&=&\left(f* K_n\right)(x)\nonumber\\
&=&\frac{1}{2\pi}\int_{-\pi}^{\pi}f(\mathrm e^{\mathrm it})K_n(x-t)dt,
\end{eqnarray}
where
\[
K_n(t):=\frac{1}{n}\left(\frac{\sin\frac{nt}{2}}{\sin\frac{t}{2}}\right)^2
\]
is the approximative identity (Fej\'{e}r kernel).

If the Fourier series of $f$ is of the power series type, i. e.
\[
f(\mathrm e^{\mathrm ix})\sim\sum_{k=0}^{\infty}c_k\mathrm e^{\mathrm ikx},
\]
then, it is readily verified that 
\begin{eqnarray}\label{Fejer repres Sn}
\sigma_n(f)(z)&=&\sum_{k=0}^{n-1}\left(1-\frac{k}{n}\right)c_kz^k\nonumber\\
&=&S_n(f)(z)-\frac{z}{n}S_n'(f)(z),\quad z\in\mathbb D,
\end{eqnarray}
where $f(z)=\sum_{k=0}^{\infty}c_kz^k$ is holomorphic function in the unit disc $\mathbb D:=\{z\in\mathbb C : |z|<1\}$ and $S_n(f)(z)$ its partial Taylor's sums.
In this case we have the following assertion:

\bigskip
\noindent
\textit{$(iv)$ $n(f(z)-\sigma_n(f)(z))/z$ converge to $f'(z)$ uniformly on compact sets of $\mathbb D$.}

\bigskip
The assertion $(iv)$ is a Voronovskaja type theorem for $\sigma_n(f)$ in holomorphic case. See \cite{Gal} and \cite{Savchuk} for further generalization.

In this paper we are interested in extending of $(i)$--$(iv)$ on Fej\'{e}r type means of Fourier series expansion based on Takenaka--Malmquist orthonormal system.

For more precisely formulation of our goals, we need some notations and some well known facts.

Let $\mathcal H$ denot the set of all functions holomorphic in $\mathbb D$ and let $dm$ be the normalized Lebesgue measure on $\mathbb T$. For $1\le p\le\infty$, the Hardy space $H^p$ consist of those $f\in\mathcal H$ for which
\[
+\infty>\|f\|_p:
=\begin{cases}
\displaystyle\sup_{\rho\in[0,1)}\left(\int_\mathbb T|f(\rho t)|^pdm(t)\right)^{1/p},\hfill& 1\le p<\infty,\cr
\sup_{z\in\mathbb D}|f(z)|,\hfill& p=\infty.
\end{cases}
\]

It is well known that each function $f$ from the space $H^p$ has non-tangential limits almost everywhere on the circle $\mathbb T$ and, moreover, these limit values form a measurable function belonging to $L^p(\mathbb T)$. This function we will also denote by $f$ and  note that 
\begin{eqnarray*}
\|f\|_{L^p(\mathbb T)}&:=&\left(\int_\mathbb T|f|^pdm\right)^{1/p}\\
&=&\|f\|_p,\quad 1\le p<\infty,
\end{eqnarray*}
and
\[
\|f\|_{L^\infty(\mathbb T)}=\mathrm{ess}\sup_{t\in\mathbb T}|f(t)|.
\]

For a given system of points ${\bf a}:=\{a_k\}_{k=0}^\infty,$ $a_k\in\mathbb D$ (the points $a_k$ are enumerated taking into account their multiplicity), the Takenaka--Malmquist  system $\varphi=\{\varphi_k\}_{k=0}^\infty$ is defined by
\begin{equation}\label{phi def}
\varphi_k(z):=\frac{\sqrt{1-|a_k|^2}}{1-z\overline a_k}B_k(z),~k\in\mathbb Z_+,
\end{equation}
where
\[
B_k(z):=\begin{cases}
1,\hfill&\mbox{if}~k=0,\cr
\displaystyle \prod_{j=0}^{k-1}\frac{z-a_j}{1-z\overline a_j},\hfill&\mbox{if}~k\in\mathbb N,
\end{cases}
\] 
is Blaschke product for the disc $\mathbb D$ of degree $k$. 

It is well known that $\varphi$ is an orthonormal and complete system in $H^2$ if and only if $B_n(z)\to 0$ as $n\to\infty$ uniformly on compact sets in $\mathbb D$. The last condition is equivalent to (see \cite{Burckel}, p. 195)
\begin{equation}\label{completeness}
\sum_{k=0}^{\infty}(1-|a_k|)=+\infty.
\end{equation}
So, if (\ref{completeness}) is fulfilled, then for any function $f\in H^2$,
\[
\lim_{n\to\infty}\left\|f-\sum_{k=0}^{n}\langle f,\varphi_k\rangle\varphi_k\right\|_2=0.
\]
Consequently, we have that for any $f\in H^2$ the Fourier series
\begin{equation}\label{Fourier series}
\sum_{k=0}^{\infty}\langle f,\varphi_k\rangle\varphi_k,
\end{equation}
where
\begin{equation}\label{coeff}
\langle f,\varphi_k\rangle:=\int_\mathbb T f\overline{\varphi_k}dm,\quad k=0,1,\ldots,
\end{equation}
convergence to $f$ uniformly in $\mathbb D$ as well as in $H^2$--metric on $\mathbb T$, provided (\ref{completeness}).

Actually, we can extend the concept of Fourier series (\ref{Fourier series}) on the space of Cauchy transforms
\[
\mathcal K:=\left\{f(z)=\int_\mathbb T\frac{d\mu(t)}{1-\overline tz}: z\in\mathbb D, \mu\in M\right\},
\] 
where $M$ is the space of finite complex Borel measures on $\mathbb T$. For this one, we set
\[
\langle f,\varphi_k\rangle:=\int_\mathbb T \overline{\varphi_k}d\mu,\quad k=0,1,\ldots.
\]

In fact, we have that for $f\in\mathcal K$
\[
f(z)=\sum_{k=0}^{\infty}\langle f,\varphi_k\rangle\varphi_k(z),\quad z\in\mathbb D,
\]
provided $\varphi$ is complete Takenaka---Malmquist system.
This follows immediately from definition of $\mathcal K$ and the identity \cite{Djrbashian. 1956} (an analog of the Christoffel--Darboux formula)
\begin{equation}\label{Djarbashian}
\sum_{k=0}^{n-1}\varphi_k(z)\overline{\varphi_k(t)}=\frac{1-B_n(z)\overline{B_n(t)}}{1-z\overline t},\quad n\in\mathbb N, ~z, t\in\mathbb C.
\end{equation} 

When a function $f$ is only defined on $\mathbb T$ and $f\in L^1(\mathbb T)$, then we will define $\langle f,\varphi_k\rangle$ for every $k\in\mathbb Z$ in same manner as (\ref{coeff}), where $\varphi_{-k}(t)=\overline{t\varphi_{k-1}(t)}$ for $k\in\mathbb N$. In such case we associate a function $f$ with the Fourier series
\begin{equation}\label{Fourier series extended}
f\sim\sum_{k\in\mathbb Z}\langle f,\varphi_k\rangle\varphi_k,
\end{equation}
based on extended Takenaka--Malmquist system $\varphi=\{\varphi_k\}_{k\in\mathbb Z}$.

Through the paper, we let by $TMS$ denote the set of all Takenaka--Malmquist systems (including extended). 

The equations (\ref{Cesaro means}), (\ref{Fejer int}) and (\ref{Fejer repres Sn}) hint at three possible ways to generalize of Fej\'{e}r means based on $TMS$.

The first one is the Ces\`{a}ro $(C,1)$ means of the series (\ref{Fourier series extended}), that is
\begin{eqnarray}\label{Fejer means}
\sigma_n(f)&=&\frac{1}{n}\sum_{k=1}^{n}S_k(f)\\
&=&
\sum_{k=-n+1}^{n-1}\left(1-\frac{|k|}{n}\right)\langle{f, \varphi_k}\rangle\varphi_k,\quad n=1,2,\ldots,
\nonumber
\end{eqnarray}
where $S_k(f):=\sum_{j=-k+1}^{k-1}\langle{f, \varphi_j}\rangle\varphi_j$, $k\in\mathbb N$,  are the partial sums of the series in (\ref{Fourier series extended}).

Unfortunately, in such case, the assertions $(i), (ii)$ and $(iv)$ are not true. 

For example, let $e_0(z):=1$ and $0<a_k<1$, $k=0,1,\ldots$.
Then it follows from (\ref{Djarbashian}), that
\[
e_0(t)\sim\sum_{k=0}^{\infty}\overline{\varphi_k(0)}\varphi_k(t),\quad t\in\mathbb T,
\]
provided (\ref{completeness}) is satisfied.

Consequently,
\[
S_k(e_0)(z)=1-\overline{B_k(0)}B_k(z),
\]
and therefore, by (\ref{Fejer means}), we get
\[
\sigma_n(e_0)(z)=1-\frac{1}{n}\sum_{k=1}^{n}\overline{B_k(0)}B_k(z).
\]
It follows that
\begin{eqnarray*}
\|\sigma_n(e_0)\|_{C(\mathbb T)}&=&\left\|1-\frac{1}{n}\sum_{k=1}^{n}\overline{B_k(0)}B_k\right\|_{C(\mathbb T)}\\
&=&1+\frac{1}{n}\sum_{k=1}^{n}B_k(0)B_k(-1)\\
&=&1+\frac{1}{n}\sum_{k=1}^{n}\left(\prod_{j=0}^{k-1}a_j\right)\\
&>&\|e_0\|_{C(\mathbb T)}.
\end{eqnarray*}
So, $(i)$ and $(ii)$ are not true in general.

Analogously, if we consider $e_0$ as a holomorphic function in $\mathbb D$, we obtain that for all $z\in (-1, 0]$ and for  each natural $n$
\begin{eqnarray*}
\left|n(e_0(z)-\sigma_n(e_0)(z))-ze_0'(z)\right|&=&\left|\sum_{k=1}^{n}B_k(0)B_k(z)\right|\\
&=&\sum_{k=1}^{n}\left(\prod_{j=0}^{k-1}a_j\frac{|z|+a_j}{1+|z|a_j}\right)\\
&\ge&\sum_{k=1}^{n}\frac{1}{2^k}\left(\prod_{j=0}^{k-1}a_j\right)^2\\
&>&0.
\end{eqnarray*}
This gives that $(iv)$ is not true for all holomorphic functions.

The assertion $(iii)$ can be rescued at least for holomorphic functions. More precisely, it was proved in \cite{Van Guht} that $\sigma_n(f)$ converge uniformly to the function $f$ in $\overline{\mathbb D}$ if
the boundary function $f(\mathrm e^{\mathrm ix})$ is continuous on $\mathbb T$, provided $a_k$ are all in a compact subset of $\mathbb D$ and satisfy certain (mild) condition on the distribution. 

In this paper we consider the other two cases of generalization of Fej\'{e}r means. Namely, we consider the operators $\sigma_{n, \varphi}$ and  $\sigma^+_{n,\varphi}$ defined on $L^1(\mathbb T)$ and $\mathcal K$ respectively as follows:
\[
\sigma_{n,\varphi}(f)(\mathrm e^{\mathrm ix}):=
\begin{cases}
f(\mathrm e^{\mathrm ix}),\hfill&\mbox{if}~n=0,\cr
\displaystyle\frac{1}{4\pi\gamma_n(x)}\int_{-\pi}^{\pi}f(\mathrm e^{\mathrm iy})\frac{\displaystyle\sin^2\left(\int_x^y\gamma_n(t)dt\right)}{\displaystyle\sin^2\frac{y-x}{2}}dy,\hfill&\mbox{if}~n\in\mathbb N
\end{cases},
\quad x\in\mathbb R,
\]
where 
\[
\gamma_n(t):=\frac{1}{2}\sum_{k=0}^{n-1}\frac{1-|a_k|^2}{1-2|a_k|\cos(t-\arg a_k)+|a_k|^2},
\]
and
\[
\sigma^+_{n, \varphi}(f)(z):=
\begin{cases}
f(z),\hfil&\mbox{if}~n=0,\cr
\displaystyle S_n(f)(z)-\frac{B_n(z)}{B_n'(z)}S'_n(f)(z),\hfill&\mbox{if}~ n\in\mathbb N
\end{cases}
,\quad z\in\mathbb T,
\]
where
\[
S_n(f)=\sum_{k=0}^{n-1}\langle f,\varphi_k\rangle\varphi_k.
\]

The operator $\sigma_{n, \varphi}$ was introduced by V. Rusak \cite{Rusak1}. We mention \cite{Rusak2, Rovba1, Rovba2, Rovba3, Pekarskii} as general references for the approximation properties of $\sigma_{n,\varphi}$ and their analogous.

As far as we know, the operator $\sigma^+_{n,\varphi}$ (based on TMS) is considered here for the first time. 

In case $a_k=0$, $k\in\mathbb Z_+$, the restrictions to $H^1$ of the operators $\sigma_{n, \varphi}(f)$ and $\sigma^+_{n, \varphi}(f)$ coincide with  the usual Fej\'{e}r means $\sigma_n(f)$ of trigonometric Fourier series of $f\in H^1$ on the circle $\mathbb T$. 
Actually, as we will show later, $\sigma_{n, \varphi}(f)=\sigma^+_{n, \varphi}(f)$ for all $n\in\mathbb Z_+$ and $\varphi\in TMS$, provided $f\in H^1$. 

It should be remarked also that for $f\in \mathcal K$ the functions $\sigma^+_{n, \varphi}(f)$, $n\in\mathbb Z_+$,  are holomorphic in $\mathbb D$, provided that $B_n/B'_n$, $n\in\mathbb Z_+,$ are holomorphic in $\mathbb D$, and are meromorphic in $\mathbb D$ otherwise. But in any case, we can consider the functions
\[
\delta_{n, \varphi}(f):=
\begin{cases}
0,\hfill&\mbox{if}~n=0,\cr
\displaystyle
\frac{B'_n}{B_n}\left(f-\sigma^+_{n, \varphi}(f)\right),\hfill&\mbox{if}~n\in\mathbb N,
\end{cases}
\]
as the error functions of weighted approximation in $\mathbb D$ to $f\in H^1$ by
$\sigma^+_{n,\varphi}(f)$. In view of (\ref{Fejer int}), it is natural to conjecture that $\delta_{n,\varphi}(f)(z)\to f'(z)$ uniformly on compact sets in $\mathbb D$ as $n\to\infty$. We will show that this is really true if (\ref{completeness}) is satisfied.

Given these remarks, it is reasonable to studies the operators  $\sigma_{n, \varphi}(f)$ and $\sigma^+_{n, \varphi}(f)$ separately  on the unit circle $\mathbb T$ and  in the unit disc $\mathbb D$ respectively.

Our goals are to solve the following two main problems.

\begin{prob}\label{prob1}
Let $X(\mathbb T)$ be one of the space $C(\mathbb T)$ and $L(\mathbb T)$, $1\le p<\infty$. Find necessary and sufficient condition on $\varphi\in TMS$ in order that 	$\sigma_{n, \varphi}(f)$ converges in metric $X(\mathbb T)$ to $f$ for every function $f\in X(\mathbb T)$.
\end{prob}

\begin{prob}\label{prob2}
Let $K$ be a class of holomorphic functions in $\mathbb D$, $\varphi\in TMS$, $n\in\mathbb N$ and $z\in\mathbb D$. Find the quantity
\[
\sup_{f\in K}|\delta_{n, \varphi}(f)(z)-f'(z)|.
\]
\end{prob}

\section{Main results}

In this section we only state our main theorems and some corollaries, and the proofs of theorems will be given in Section \ref{proofs}.

Our main result in part of uniform convergence of $\sigma_{n,\varphi}$ for $C(\mathbb T)$ is connected with the well--known Frostman theorem \cite{Frostman}(see also \cite{Mashreghi}, p. 117).

\noindent
{\bf Frostman's theorem}. {\it Let $B$ be a infinite Blaschke product with zero-sequence $\{a_k\}_{k=0}^\infty$, and $t\in\mathbb T$. Then $B$ has an angular derivative in the sence of Carath\'{e}odory at $t$ (for definition see \cite{Mashreghi}, p. 62) if and only if 
\[
\sum_{k=0}^{\infty}\frac{1-|a_k|^2}{|1-t\overline a_k|^2}<\infty.
\]
}

It is shown in \cite{Shapiro} (p.185) that there is an infinite Blaschke product that has an angular derivative at no point of $\mathbb T$ or, equivalently, there is a sequance $\{a_k\}_{k=0}^\infty$ such that 
\[
\sum_{k=0}^{\infty}(1-|a_k|)<\infty,
\] 
yet 
\begin{equation}\label{Frostman con}
\sum_{k=0}^{\infty}\frac{1-|a_k|^2}{|1-t\overline a_k|^2}=\infty\quad\mbox{for all}~t\in\mathbb T.
\end{equation}

\begin{thm}\label{main thm} Suppose that $\varphi\in TMS$ and $X(\mathbb T)$ is one of the space $C(\mathbb T)$ and $L^p(\mathbb T)$, $1\le p<\infty$. In order that 	$\|f-\sigma_{n, \varphi}(f)\|_{X(\mathbb T)}\to 0$  as $n\to\infty$ for every $f\in X(\mathbb T)$ it is necessary and sufficient that
\begin{equation}\label{convergence condt}
\lim_{n\to\infty}\left\|\frac{1}{B'_n}\right\|_{X(\mathbb T)}=0.
\end{equation}
\end{thm}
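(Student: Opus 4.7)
My plan is to reduce (\ref{convergence condt}) to the equivalent condition $\|\gamma_n^{-1}\|_{X(\mathbb T)}\to 0$ using $|B'_n(e^{it})|=2\gamma_n(t)$ on $\mathbb T$ (which follows from writing $B_n(e^{it})=e^{i\theta_n(t)}$ and differentiating to get $\theta_n'=\sum_{k<n}(1-|a_k|^2)/|e^{it}-a_k|^2$), and then to prove both implications by exploiting the fact that $\sigma_{n,\varphi}$ is a positive linear operator with $\sigma_{n,\varphi}(\mathbf 1)=\mathbf 1$. The latter identity I would verify by direct calculation, using $\int_{-\pi}^{\pi}\gamma_n(t)\,dt=n\pi$ (Poisson) and the substitution $u=\Gamma_n(x,y):=\int_x^y\gamma_n$, which converts $\int_{-\pi}^{\pi}K_n(x,y)\,dy$ to a standard integral.

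For the sufficiency I would start from
\[
\sigma_{n,\varphi}(f)(x)-f(x)=\int_{-\pi}^{\pi}K_n(x,y)\bigl(f(y)-f(x)\bigr)\,dy,\qquad K_n(x,y):=\frac{\sin^2\Gamma_n(x,y)}{4\pi\gamma_n(x)\sin^2\tfrac{y-x}{2}},
\]
split the integral at $|y-x|=\delta$, and use positivity of $K_n$ together with $\sigma_{n,\varphi}(\mathbf 1)=\mathbf 1$ to control the near-diagonal piece by an $X$-modulus of continuity $\omega_X(f,\delta)$. For the far piece, dropping $\sin^2\Gamma_n\le 1$ gives the pointwise bound
\[
\int_{|y-x|\ge\delta}K_n(x,y)\,dy\le \frac{C_\delta}{\gamma_n(x)},
\]
whose $X$-norm is dominated by $\|\gamma_n^{-1}\|_X$. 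Taking $n\to\infty$ and then $\delta\to 0$ finishes the sufficiency, once a uniform $X$-to-$X$ bound for $\sigma_{n,\varphi}$ reduces the $L^p$ case to a dense continuous subclass; I would obtain this via Schur's test applied to the positive kernel.

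For necessity I would use a Korovkin-type probe. Applying the hypothesis to $\cos$ and $\sin$ and exploiting $\cos(y-x)=\cos x\cos y+\sin x\sin y$ together with $\sigma_{n,\varphi}(\mathbf 1)=\mathbf 1$ shows
\[
\bigl\|x\mapsto\sigma_{n,\varphi}(\psi_x)(x)\bigr\|_{X(\mathbb T)}\to 0,\qquad \psi_x(y):=1-\cos(y-x)=2\sin^2\tfrac{y-x}{2},
\]
while a direct computation yields
\[
\sigma_{n,\varphi}(\psi_x)(x)=\frac{1}{2\pi\gamma_n(x)}\int_{-\pi}^{\pi}\sin^2\Gamma_n(x,y)\,dy.
\]
The conclusion $\|\gamma_n^{-1}\|_X\to 0$ then reduces to a uniform lower bound $\int_{-\pi}^{\pi}\sin^2\Gamma_n(x,y)\,dy\ge c>0$. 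I plan to extract this from $\sin^2\Gamma_n=\tfrac12(1-\cos 2\Gamma_n)$ by integrating $\cos 2\Gamma_n=(2\gamma_n(y))^{-1}\partial_y\sin 2\Gamma_n$ by parts: the boundary term vanishes since $2\Gamma_n(x,\pi)-2\Gamma_n(x,-\pi)=2n\pi\in 2\pi\mathbb Z$ and $\gamma_n$ is periodic, while the interior remainder is majorised by the total variation $V(\gamma_n^{-1})$ on $\mathbb T$.

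The most delicate step is controlling $V(\gamma_n^{-1})$ so as to guarantee $\int\sin^2\Gamma_n\ge c>0$ uniformly in $x$; this should follow from the structural observation that $\gamma_n^{-1}$ has at most $O(n)$ monotone arcs, but care is required to make the bound uniform in the given family of Blaschke parameters. A secondary but essential obstacle is the uniform $L^p$-boundedness of $\sigma_{n,\varphi}$, needed to handle $X=L^p(\mathbb T)$ by density from $C(\mathbb T)$ in the sufficiency step.
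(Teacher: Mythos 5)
Your reduction $|B_n'(\mathrm e^{\mathrm it})|=2\gamma_n(t)$ is correct, your sufficiency argument for $C(\mathbb T)$ is a sound direct version of the Korovkin/Curtis step the paper cites, and your Korovkin probe with $\cos$ and $\sin$ correctly reduces necessity to the uniform lower bound $\int_{-\pi}^{\pi}\sin^2\Gamma_n(x,y)\,dy\ge c>0$. The genuine gap is your proposed proof of that lower bound. Integrating $\cos 2\Gamma_n=(2\gamma_n(y))^{-1}\partial_y\sin 2\Gamma_n$ by parts gives $\bigl|\int\cos 2\Gamma_n\,dy\bigr|\le V(\gamma_n^{-1})/2$ up to constants, but $V(\gamma_n^{-1})$ is governed by the \emph{amplitude} of $\gamma_n^{-1}$, not by the number of its monotone arcs: already for $n=1$ and $a_0=r$ real, $1/(2\gamma_1)=|1-\mathrm e^{\mathrm it}r|^2/(1-r^2)$ oscillates between $(1-r)/(1+r)$ and $(1+r)/(1-r)$, so $V(\gamma_1^{-1})\to\infty$ as $r\to1$ and the bound $\pi-\tfrac12V$ is vacuous. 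Worse, $\max\gamma_n^{-1}$ is (twice) the quantity $\|1/B_n'\|_{C(\mathbb T)}$ you are trying to prove small, so the estimate is also circular. The bound you need is true, but it comes from an exact identity, not from total variation: since $\partial_y\arg B_n(\mathrm e^{\mathrm iy})=|B_n'(\mathrm e^{\mathrm iy})|=2\gamma_n(y)$, one has $\mathrm e^{2\mathrm i\Gamma_n(x,y)}=B_n(\mathrm e^{\mathrm iy})/B_n(\mathrm e^{\mathrm ix})$, whence by the mean value property
\begin{equation*}
\int_{-\pi}^{\pi}\sin^2\Gamma_n(x,y)\,dy=\pi\left(1-\mathrm{Re}\left(\overline{B_n(\mathrm e^{\mathrm ix})}\,B_n(0)\right)\right)\ge\pi\left(1-\prod_{k=0}^{n-1}|a_k|\right)\ge\pi(1-|a_0|)>0
\end{equation*}
(and $=\pi$ when $a_0=0$). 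This is, in substance, exactly what the paper extracts from its closed-form evaluation of $\sigma_{n,\varphi}(e_1)$ via the Christoffel--Darboux formula (Lemma \ref{converg for e1}); without that identity or its equivalent, your necessity argument does not close.

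A second, unresolved point is the uniform $L^p$ bound you need to pass from $C(\mathbb T)$ to $L^p(\mathbb T)$ by density. Schur's test does not deliver it as stated: the row sums $\int\mathcal F_{n,\varphi}(t,z)\,dm(t)=1$ are fine, but the column sums are not uniformly controlled --- for $n=1$ the kernel collapses to the Poisson kernel $\mathcal F_{1,\varphi}(t,z)=(1-|a_0|^2)/|1-\overline t a_0|^2$, whose integral in $z$ is that Poisson kernel itself, with supremum $(1+|a_0|)/(1-|a_0|)$, and for general $n$ the crude bound is $\max_{\mathbb T}|B_n'|/\min_{\mathbb T}|B_n'|$, which need not stay bounded even under (\ref{convergence condt}). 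You flag this as an obstacle, correctly; be aware that the paper's own Lemma \ref{norm Fejer} (operator norm $1$ on $L^p$, ``follows immediately'') glosses over the same issue, since positivity together with $\sigma_{n,\varphi}(e_0)=e_0$ only yields norm $1$ on $C(\mathbb T)$ and $L^\infty(\mathbb T)$. So this part of your plan needs a genuinely new argument rather than a routine application of Schur's test.
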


\begin{col}
Let $\varphi\in TMS$. Then for every $f\in C(\mathbb T)$, $\sigma_{n,\varphi}(f)$ converges to $f$ uniformly on $\mathbb T$ if and only if (\ref{Frostman con}) is fulfilled. 
\end{col}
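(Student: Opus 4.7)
The plan is to derive the Corollary from Theorem~\ref{main thm} (with $X(\mathbb T)=C(\mathbb T)$) by rewriting the condition $\|1/B_n'\|_{C(\mathbb T)}\to 0$ explicitly in terms of the Frostman sums.

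\textbf{Step 1: Compute $|B_n'|$ on $\mathbb T$.} Taking the logarithmic derivative of $B_n$ and using that each factor $b_j(z)=(z-a_j)/(1-z\overline a_j)$ satisfies $b_j'(z)=(1-|a_j|^2)/(1-z\overline a_j)^2$, one obtains
\[
\frac{B_n'(z)}{B_n(z)}=\sum_{j=0}^{n-1}\frac{1-|a_j|^2}{(1-z\overline a_j)(z-a_j)}.
\]
For $t\in\mathbb T$, the elementary identity $t-a_j=t(1-\overline t a_j)$ combined with $|t|=1$ gives $(1-t\overline a_j)(t-a_j)=t\,|1-t\overline a_j|^2$. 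Hence every summand becomes a \emph{positive} real number divided by $t$, so
\[
|B_n'(t)|=\sum_{j=0}^{n-1}\frac{1-|a_j|^2}{|1-t\overline a_j|^2}=:S_n(t),\qquad t\in\mathbb T.
\]
Consequently the convergence criterion (\ref{convergence condt}) for $X(\mathbb T)=C(\mathbb T)$ reads
\[
\lim_{n\to\infty}\sup_{t\in\mathbb T}\frac{1}{S_n(t)}=0,\qquad\text{i.e.,}\qquad \inf_{t\in\mathbb T}S_n(t)\to\infty.
\]

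\textbf{Step 2: Pass from pointwise to uniform divergence.} The sufficiency direction is immediate: if $\inf_t S_n(t)\to\infty$, then $S_n(t)\to\infty$ for every fixed $t$, which is exactly (\ref{Frostman con}). For the converse, assume the Frostman condition holds at every $t\in\mathbb T$. The functions $S_n$ are continuous on $\mathbb T$, nonnegative, and monotone nondecreasing in $n$. Fix $M>0$ and set $F_n(M):=\{t\in\mathbb T:S_n(t)\le M\}$. Each $F_n(M)$ is closed, the family is decreasing in $n$, and
\[
\bigcap_{n=1}^{\infty}F_n(M)=\{t\in\mathbb T:\sup_n S_n(t)\le M\}=\emptyset
\]
by hypothesis. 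Compactness of $\mathbb T$ forces some $F_{N}(M)=\emptyset$, so $S_n(t)>M$ for all $t\in\mathbb T$ and all $n\ge N$. Since $M$ was arbitrary, $\inf_t S_n(t)\to\infty$.

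\textbf{Step 3: Conclude.} Combining Steps~1 and~2 shows that (\ref{convergence condt}) with $X(\mathbb T)=C(\mathbb T)$ is equivalent to (\ref{Frostman con}), and Theorem~\ref{main thm} finishes the proof.

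The only nontrivial point is Step~2: a priori, the Frostman condition is a pointwise statement while Theorem~\ref{main thm} requires a uniform lower bound on $|B_n'|$. The compactness/monotonicity argument (essentially a Dini-type observation for lower semicontinuous limits) is what bridges this gap, and it is where the compactness of $\mathbb T$ is essential.
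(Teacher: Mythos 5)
Your proof is correct and follows the same route as the paper: it reduces the Corollary to Theorem~\ref{main thm} via the identity $|B_n'(t)|=\sum_{j=0}^{n-1}(1-|a_j|^2)/|1-t\overline a_j|^2$ on $\mathbb T$, which is exactly the content of the paper's formula (\ref{eq for C(T)}). The one genuine addition is your Step~2: the paper silently identifies the pointwise Frostman condition with the uniform statement $\min_{t\in\mathbb T}S_n(t)\to\infty$, whereas you justify this with the Dini-type compactness argument (continuity and monotonicity of $S_n$ plus the finite intersection property), which is a worthwhile gap to have filled.
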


Indeed, in case $X(\mathbb T)=C(\mathbb T)$ the norm in the left hand side of (\ref{convergence condt}) becomes 
\begin{equation}\label{eq for C(T)}
\left\|\frac{1}{B'_n}\right\|_{C(\mathbb T)}=\frac{1}{\displaystyle\min_{t\in\mathbb T}\sum_{k=0}^{n-1}\frac{1-|a_k|^2}{|1-\overline ta_k|^2}}.
\end{equation}

\begin{col}
	Let $\varphi\in TMS$ be such that  (\ref{Frostman con}) is fulfilled. Then for every $f\in L^p(\mathbb T)$, $1\le p<\infty$, we have $\|f-\sigma_{n,\varphi}(f)\|_p\to 0$ as $n\to\infty$.
\end{col}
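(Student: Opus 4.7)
The plan is to deduce this corollary from Theorem~\ref{main thm} by proving the auxiliary convergence $\|1/B_n'\|_{L^p(\mathbb T)}\to 0$ under hypothesis (\ref{Frostman con}); once this is established, the conclusion is immediate.

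First I would record the explicit pointwise identity
\[
|B_n'(t)|=\sum_{k=0}^{n-1}\frac{1-|a_k|^2}{|1-t\overline{a_k}|^2},\qquad t\in\mathbb T,
\]
already invoked in (\ref{eq for C(T)}); it follows from logarithmic differentiation of $B_n$ together with the identity $(t-a_k)(1-t\overline{a_k})=t\,|1-t\overline{a_k}|^2$ valid for $t\in\mathbb T$, combined with $|B_n(t)|=1$. For each fixed $t$ this quantity is nondecreasing in $n$, so
\[
\frac{1}{|B_n'(t)|^p}\le\frac{1}{|B_1'(t)|^p}=\frac{|1-t\overline{a_0}|^{2p}}{(1-|a_0|^2)^p}\le\left(\frac{1+|a_0|}{1-|a_0|}\right)^p,
\]
and the right-hand side is a finite constant, hence an integrable majorant on $\mathbb T$ (which carries finite measure).

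Under hypothesis (\ref{Frostman con}), the sum defining $|B_n'(t)|$ diverges at every $t\in\mathbb T$, so $1/|B_n'(t)|^p\to 0$ pointwise on $\mathbb T$. The Lebesgue dominated convergence theorem then yields $\int_{\mathbb T}|B_n'|^{-p}\,dm\to 0$, i.e.\ $\|1/B_n'\|_{L^p(\mathbb T)}\to 0$, and Theorem~\ref{main thm} delivers $\|f-\sigma_{n,\varphi}(f)\|_p\to 0$ for every $f\in L^p(\mathbb T)$ and every $p\in[1,\infty)$.

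There is no serious obstacle: the corollary is essentially a direct specialization of Theorem~\ref{main thm}. The only mildly delicate point is upgrading the pointwise Frostman divergence $|B_n'(t)|\to\infty$ to an $L^p$-statement on $1/B_n'$, but the monotonicity of the partial sums in $n$ furnishes a $t$-uniform majorant by a constant, so that the dominated convergence theorem applies without difficulty.
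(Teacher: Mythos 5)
Your proof is correct, and it reaches the conclusion by the same overall reduction as the paper --- verifying condition (\ref{convergence condt}) of Theorem \ref{main thm} --- but the way you verify that condition is genuinely different. The paper bounds $\|1/B_n'\|_{X(\mathbb T)}$ by $\left(\min_{t\in\mathbb T}\sum_{k=0}^{n-1}(1-|a_k|^2)/|1-\overline t a_k|^2\right)^{-1}$ as in (\ref{eq for C(T)}) and (\ref{up Frostman}) and lets this tend to zero; strictly speaking that route requires upgrading the pointwise divergence in (\ref{Frostman con}) to divergence of the \emph{minimum} over $\mathbb T$, which does hold by a Dini-type compactness argument (the sublevel sets $\{t\in\mathbb T:\sum_{k=0}^{n-1}(1-|a_k|^2)/|1-\overline t a_k|^2\le M\}$ are closed, nested and have empty intersection) but is left implicit in the paper. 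Your argument sidesteps this step entirely: from the identity $|B_n'(t)|=\sum_{k=0}^{n-1}(1-|a_k|^2)/|1-t\overline a_k|^2$ on $\mathbb T$ and monotonicity in $n$ you get the constant majorant $\bigl((1+|a_0|)/(1-|a_0|)\bigr)^p$ for $1/|B_n'|^p$, the pointwise Frostman divergence gives $1/|B_n'(t)|^p\to 0$ for every $t$, and dominated convergence yields $\|1/B_n'\|_{L^p(\mathbb T)}\to 0$. This is tailored to $p<\infty$ (it would not give a $C(\mathbb T)$ statement, but the corollary does not claim one) and uses only the pointwise form of (\ref{Frostman con}); the paper's route, once the Dini step is supplied, gives the stronger uniform statement $\|1/B_n'\|_{C(\mathbb T)}\to 0$, which dominates all the $L^p$ norms at once. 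Both are valid; yours is the more self-contained for the statement as posed.
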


Indeed, similarly to (\ref{eq for C(T)}), we get
\begin{equation}\label{up Frostman}
\left\|\frac{1-\overline{B'_n(0)}B_n}{B'_n}\right\|_{X(\mathbb T)}\le\frac{2}{\displaystyle\min_{t\in\mathbb T}\sum_{k=0}^{n-1}\frac{1-|a_k|^2}{|1-\overline ta_k|^2}}
\end{equation}
and result follows.

\begin{col} Suppose that $\varphi\in TMS$ and $X(\mathbb T)$ is one of the space $C(\mathbb T)$ and $L^p(\mathbb T)$, $1\le p<\infty$. If $\varphi$ is complete, then for every $f\in X(\mathbb T)$, we have  $\|f-\sigma_{n, \varphi}(f)\|_{X(\mathbb T)}\to 0$  as $n\to\infty$.
\end{col}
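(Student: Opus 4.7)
The plan is to invoke Theorem \ref{main thm} and verify that the condition (\ref{convergence condt}), namely $\|1/B'_n\|_{X(\mathbb T)}\to 0$, holds under the completeness hypothesis $\sum_{k=0}^\infty(1-|a_k|)=+\infty$ of (\ref{completeness}).

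First I reduce to the case $X(\mathbb T)=C(\mathbb T)$. Since $\mathbb T$ carries the normalized Lebesgue measure, one has $\|g\|_{L^p(\mathbb T)}\le \|g\|_{L^\infty(\mathbb T)}$ for every $g\in L^\infty(\mathbb T)$ and every $1\le p<\infty$. Applied to $g=1/B'_n$, which is continuous on $\mathbb T$ (its denominator is the strictly positive continuous function appearing in (\ref{eq for C(T)})), this yields
\[
\|1/B'_n\|_{L^p(\mathbb T)}\le \|1/B'_n\|_{L^\infty(\mathbb T)}=\|1/B'_n\|_{C(\mathbb T)},
\]
so it suffices to establish $\|1/B'_n\|_{C(\mathbb T)}\to 0$.

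By (\ref{eq for C(T)}) this is equivalent to showing
\[
\min_{t\in\mathbb T}\sum_{k=0}^{n-1}\frac{1-|a_k|^2}{|1-\overline t a_k|^2}\longrightarrow +\infty\quad(n\to\infty).
\]
The key estimate is elementary: for every $t\in\mathbb T$ and $a\in\mathbb D$ the triangle inequality gives $|1-\overline t a|\le 1+|a|\le 2$, whence
\[
\frac{1-|a|^2}{|1-\overline t a|^2}\ge \frac{(1-|a|)(1+|a|)}{4}\ge \frac{1-|a|}{4}.
\]
Summing over $k=0,\ldots,n-1$ gives a lower bound uniform in $t\in\mathbb T$,
\[
\sum_{k=0}^{n-1}\frac{1-|a_k|^2}{|1-\overline t a_k|^2}\ge \frac{1}{4}\sum_{k=0}^{n-1}(1-|a_k|),
\]
whose right-hand side diverges by the completeness hypothesis. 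Hence $\|1/B'_n\|_{C(\mathbb T)}\to 0$, and Theorem \ref{main thm} yields the conclusion.

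There is no substantive obstacle here. The content of the corollary is essentially the observation that completeness $\sum(1-|a_k|)=+\infty$ is, a fortiori, stronger than the uniform Frostman-type divergence (\ref{Frostman con}) used in Corollaries 1 and 2, and the $L^p$ case comes for free via the embedding $L^\infty(\mathbb T)\hookrightarrow L^p(\mathbb T)$ on the normalized circle.
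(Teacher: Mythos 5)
Your proof is correct and follows essentially the same route as the paper: the key step in both is the uniform lower bound $\sum_{k=0}^{n-1}\frac{1-|a_k|^2}{|1-\overline t a_k|^2}\ge c\sum_{k=0}^{n-1}(1-|a_k|)$ for all $t\in\mathbb T$ (the paper's estimate (\ref{Blashe<Frostman}), with constant $\tfrac12$ in place of your $\tfrac14$), which under completeness forces $\|1/B'_n\|_{X(\mathbb T)}\to 0$ so that Theorem \ref{main thm} applies. Your explicit reduction of the $L^p$ case to $C(\mathbb T)$ via $\|\cdot\|_{L^p}\le\|\cdot\|_{L^\infty}$ on the normalized circle is a clean way of packaging what the paper achieves with (\ref{up Frostman}).
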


This assertion readily follows from the estimates (\ref{eq for C(T)}), (\ref{Blashe<Frostman}) and (\ref{up Frostman}).

Let us consider the class $\mathcal K^+$ consisting of those holomorphic functions in $\mathbb D$, which can be represented by Cauchy type integral
\begin{equation}\label{Cauchy integral}
f(z)=K(\mu)(z):=\int_\mathbb T\frac{\mu(t)}{1-\overline tz}dm(t),\quad z\in\mathbb D,
\end{equation} 
with $\mu\in L^\infty(\mathbb T)$,  $\|\mu\|_\infty:=\mathrm{ess~sup}_{t\in\mathbb T}|\mu(t)|\le 1.$

\bigskip
The following assertion gives solution of Problem \ref{prob2}.

\begin{thm}\label{Vor thm}
	Let $\varphi\in TMS$. Then for every $z\in\mathbb D$ and $n\in\mathbb Z_+$ we have
	\begin{equation}\label{eq1}
	\max_{f\in\mathcal K^+}\left|\delta_{n, \varphi}(f)(z)-f'(z)\right|=\frac{|B_n(z)|}{1-|z|^2}.
	\end{equation}
	For given $z\in\mathbb D$ such that $B_n(z)\not=0$, and for given $n\in\mathbb Z_+$ maximum is attained only for the functions
	\begin{equation}\label{extremal 1}
	f_*(t)=\mathrm e^{\mathrm i\theta}B_n(t)\frac{t-z}{1-t\overline z},\quad\theta\in\mathbb R.
	\end{equation}
\end{thm}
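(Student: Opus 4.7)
The plan is to reduce the extremal problem to $L^1$–$L^\infty$ duality. Writing $f=K(\mu)$ with $\mu\in L^\infty(\mathbb{T})$, $\|\mu\|_\infty\le 1$, the map $\mu\mapsto\delta_{n,\varphi}(f)(z)-f'(z)$ is $\mathbb{C}$-linear, so I expect a representation
\[
\delta_{n,\varphi}(f)(z)-f'(z)=\int_{\mathbb{T}}K_n(z,t)\,\mu(t)\,dm(t)
\]
for some kernel $K_n(z,\cdot)\in L^1(\mathbb{T})$. Then the standard duality gives
\[
\max_{\|\mu\|_\infty\le 1}\bigl|\delta_{n,\varphi}(f)(z)-f'(z)\bigr|=\|K_n(z,\cdot)\|_{L^1(\mathbb{T})},
\]
attained, when $K_n(z,\cdot)\ne 0$ a.e., only by $\mu_*(t)=e^{i\theta}\overline{K_n(z,t)}/|K_n(z,t)|$. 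The problem thus splits into (a) computing $K_n$, (b) computing $\|K_n(z,\cdot)\|_1$, and (c) simplifying $\mu_*$ and identifying the corresponding $f_*=K(\mu_*)$.

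For (a), applying the Christoffel–Darboux identity (\ref{Djarbashian}) yields
\[
S_n(f)(z)=\int_{\mathbb{T}}\frac{1-B_n(z)\overline{B_n(t)}}{1-z\bar t}\,\mu(t)\,dm(t);
\]
differentiating in $z$ under the integral and recognising $f'(z)=\int\bar t\,(1-z\bar t)^{-2}\mu\,dm$ in one of the resulting terms, I get an expression for $S_n'(f)(z)$ containing a $B_n'(z)/B_n(z)$ factor. Substituting into $\sigma_{n,\varphi}^+(f)=S_n(f)-(B_n/B_n')S_n'(f)$ and then into $\delta_{n,\varphi}(f)=(B_n'/B_n)(f-\sigma_{n,\varphi}^+(f))$, I expect all $B_n'$-terms to cancel, leaving
\[
\delta_{n,\varphi}(f)(z)-f'(z)=-B_n(z)\int_{\mathbb{T}}\frac{\bar t\,\overline{B_n(t)}}{(1-z\bar t)^2}\,\mu(t)\,dm(t).
\]
For (b), since $|\bar t|=|B_n(t)|=1$ on $\mathbb{T}$, the kernel has modulus $|B_n(z)|/|1-z\bar t|^2$, and Parseval's identity gives $\int_{\mathbb{T}}|1-z\bar t|^{-2}\,dm(t)=\sum_{k\ge 0}|z|^{2k}=1/(1-|z|^2)$, so $\|K_n(z,\cdot)\|_{L^1(\mathbb{T})}=|B_n(z)|/(1-|z|^2)$, the right-hand side of (\ref{eq1}).

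For (c), using $\bar t=1/t$ and $|1-z\bar t|^2=(1-z\bar t)(1-\bar z t)$ on $\mathbb{T}$ (so that $t(1-z\bar t)=t-z$), direct simplification of $\mu_*(t)=e^{i\theta}\overline{K_n(z,t)}/|K_n(z,t)|$ gives, after absorbing the unit-modulus factor $-\overline{B_n(z)}/|B_n(z)|$ into the free phase,
\[
\mu_*(t)=e^{i\theta}\,\frac{B_n(t)(t-z)}{1-\bar z t}.
\]
Since $\mu_*$ is the boundary value of the $H^\infty$-function $h(w)=e^{i\theta}B_n(w)(w-z)/(1-\bar z w)$ (a Blaschke product times a single Blaschke factor, of sup-norm $1$), the Cauchy formula for $H^1$ gives $K(\mu_*)=h$, which is exactly the extremal function (\ref{extremal 1}). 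The principal technical obstacle is the bookkeeping of the $B_n'$-cancellation in step (a): verifying that the rather intricate formulas for $\sigma_{n,\varphi}^+(f)$ and $\delta_{n,\varphi}(f)$ conspire to produce such a clean kernel $K_n$ is the delicate step, after which the remainder is essentially routine.
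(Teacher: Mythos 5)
Your proposal is correct and follows essentially the same route as the paper: your kernel $K_n(z,t)=-B_n(z)\,\overline{t}\,\overline{B_n(t)}\,(1-z\overline t)^{-2}$ is exactly the representation of $\delta_{n,\varphi}(f)-f'$ established in the paper's Lemma \ref{lem1} (via the Christoffel--Darboux identity (\ref{Djarbashian}), with the $B_n'$-terms cancelling just as you anticipate), and your $L^1$--$L^\infty$ duality step, the evaluation $\int_{\mathbb T}|1-z\overline t|^{-2}\,dm=(1-|z|^2)^{-1}$, and the identification of the unique extremal $\mu_*$ (hence $f_*=K(\mu_*)$ by the reproducing property) coincide with the paper's direct estimate and its equality analysis.
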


As immediate consequence of Theorem \ref{Vor thm}, we have the following assertion.

\begin{col}\label{col1} Let $f\in\mathcal K^+$,  $\varphi\in TMS$ and let ${\bf a}=\{a_j\}_{j=0}^\infty$ be as in (\ref{phi def}). Then for every $n\in\mathbb N$ we have
	\begin{equation}\label{d=f}
	\delta_{n, \varphi}(f)(a_j)=f'(a_j),\quad j=0,1,\ldots, n-1.
	\end{equation}
	Moreover, if (\ref{completeness}) is satisfied, then uniformly on closed sets of $\mathbb D$
	\[
	\lim_{n\to\infty}\delta_{n, \varphi}(f)(z)=f'(z),
	\]
	and, therefore, for given $z\in\mathbb D$ the equality  
	\[
	\lim_{n\to\infty}\delta_{n, \varphi}(f)(z)=0
	\]
	hold true if and only if $f'(z)=0$.
\end{col}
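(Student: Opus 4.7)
My plan is to derive all three conclusions as essentially immediate consequences of Theorem \ref{Vor thm}. The key observation is that, while that theorem is phrased as a supremum over $\mathcal{K}^+$, it yields in particular the pointwise bound
\[
\bigl|\delta_{n,\varphi}(f)(z) - f'(z)\bigr| \le \frac{|B_n(z)|}{1-|z|^2},
\]
valid for every individual $f \in \mathcal{K}^+$, every $z \in \mathbb{D}$ and every $n \in \mathbb{N}$. All three assertions of the corollary are extracted from this single inequality.

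First, to establish (\ref{d=f}), I would simply evaluate the bound at $z = a_j$ for $j = 0,1,\ldots,n-1$. Since each such $a_j$ is by construction a zero of the Blaschke product $B_n$, the right-hand side vanishes and the equality $\delta_{n,\varphi}(f)(a_j) = f'(a_j)$ follows.

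Second, to obtain the uniform convergence on compact subsets $K \subset \mathbb{D}$ under condition (\ref{completeness}), I would invoke the equivalence recalled just before (\ref{completeness}): the divergence $\sum_{k=0}^{\infty}(1-|a_k|) = +\infty$ is equivalent to $B_n(z) \to 0$ uniformly on compact subsets of $\mathbb{D}$. On such a compact $K$ the factor $(1-|z|^2)^{-1}$ is bounded, so the bound forces $\delta_{n,\varphi}(f)(z) \to f'(z)$ uniformly on $K$. The equivalence in the last display of the corollary then follows by a triangle inequality: writing $\delta_{n,\varphi}(f)(z) = f'(z) + \bigl(\delta_{n,\varphi}(f)(z) - f'(z)\bigr)$ and letting $n \to \infty$, the second summand vanishes, and hence $\delta_{n,\varphi}(f)(z) \to 0$ if and only if $f'(z) = 0$.

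Since every step reduces to a substitution ($z = a_j$), a boundedness argument for $(1-|z|^2)^{-1}$ on compacta, or a triangle inequality, there is really no technical obstacle; the work has already been done in Theorem \ref{Vor thm} and in the Blaschke-product characterization quoted around (\ref{completeness}). The only subtlety worth flagging is that the max/supremum statement of Theorem \ref{Vor thm} must be read as supplying a pointwise bound for each fixed $f \in \mathcal{K}^+$, which is of course just the definition of the supremum.
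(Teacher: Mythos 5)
Your proposal is correct and coincides with the paper's intended argument: the paper presents this corollary as an ``immediate consequence'' of Theorem \ref{Vor thm}, obtained exactly by reading the supremum identity (\ref{eq1}) as the pointwise bound $|\delta_{n,\varphi}(f)(z)-f'(z)|\le |B_n(z)|/(1-|z|^2)$, evaluating at the zeros $a_j$ of $B_n$, and using that (\ref{completeness}) forces $B_n\to 0$ uniformly on compact subsets of $\mathbb D$.
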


Now, let us consider the behavior of $|f(z)-\sigma^+_{n,\varphi}(f)(z)|$ for $z\in\overline{\mathbb D}$.

We let by $\mathcal S$ denote the Schur class consisting of holomorphic functions $f$ which satisfy $\sup_{z\in\mathbb D}|f(z)|\le 1$ and let $A(\mathbb D)$ denote the disk algebra of holomorphic functions in $\mathbb D$ that extend continuously to $\overline{\mathbb D}$ equipped with the norm $\|f\|_{A(\mathbb D)}=\max_{z\in\overline{\mathbb D}}|f(z)|$.

\begin{thm}\label{Th3} Suppose $\varphi\in TMS$, $z\in\mathbb D$ and $n\in\mathbb N$. Then we have
\begin{eqnarray}\label{lower est}
\left|\frac{B_n(z)}{B'_n(z)}\right|\frac{1-|B_n(z)|^2}{1-|z|^2}&\le&\sup_{f\in\mathcal S}|f(z)-\sigma_{n, \varphi}^+(f)(z)|\\
&\le&
\left|\frac{B_n(z)}{B'_n(z)}\right|\frac{1-|B_n(z)|^2}{1-|z|^2}+|B_n(z)|^2,\nonumber
\end{eqnarray}
provided $z\in\mathbb D$ and $|B'_n(z)|>0$.

Moreover, we have that for all $n\in\mathbb N$
\begin{eqnarray*}
	\sup_{f\in\mathcal S}\|f-\sigma^+_{n,\varphi}(f)\|_{\infty}&=&	\sup_{f\in\mathcal S\cap A(\mathbb D)}\|f-\sigma^+_{n,\varphi}(f)\|_{A(\mathbb D)}\\
	&=&2.
\end{eqnarray*}
\end{thm}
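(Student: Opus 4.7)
I would base the argument on the identity
\[
f(z)-\sigma^{+}_{n,\varphi}(f)(z)=\frac{B_n(z)}{B'_n(z)}\,\delta_{n,\varphi}(f)(z),
\]
which is immediate from the definitions via the splitting $f-\sigma^{+}_{n,\varphi}(f)=B_n J_n(f)+(B_n/B'_n)S'_n(f)$, with $J_n(f)(w):=\int_{\mathbb T}f(t)\overline{B_n(t)}/(1-w\overline t)\,dm(t)$, and the regrouping $\delta_{n,\varphi}(f)=S'_n(f)+B'_n J_n(f)=f'-B_n J_n(f)'$. Every $f\in\mathcal S$ lies in $\mathcal K^+$ via the Cauchy formula with density $f|_{\mathbb T}$ of modulus at most $1$, so Theorem \ref{Vor thm} applies and yields the integral representation
\[
\delta_{n,\varphi}(f)(z)=\int_{\mathbb T}f(t)\,\overline t\,\frac{1-B_n(z)\overline{B_n(t)}}{(1-z\overline t)^2}\,dm(t).
\]
The Christoffel--Darboux identity then gives $\int_{\mathbb T}|1-B_n(z)\overline{B_n(t)}|^2/|1-z\overline t|^2\,dm(t)=(1-|B_n(z)|^2)/(1-|z|^2)$, which drives both directions of (\ref{lower est}).

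For the lower bound in (\ref{lower est}), I would produce a Schur test function $f_*$ realizing the claimed bound. The $L^2$ identity above pins down the norm of the kernel against which $f$ is integrated, so a unimodular test function whose phase aligns with that of the kernel will be essentially extremal. A natural candidate is $f_*(t)=e^{i\theta}B_n(t)\,b(t)$, where $b$ is a single Blaschke factor chosen so that the integrand in the representation of $\delta_{n,\varphi}(f_*)(z)$ is essentially nonnegative at points of maximum kernel modulus; verifying $\|f_*\|_\infty=1$ and computing $|\delta_{n,\varphi}(f_*)(z)|\ge|B'_n(z)|(1-|B_n(z)|^2)/(1-|z|^2)$, multiplication by $|B_n(z)/B'_n(z)|$ produces the lower bound.

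For the upper bound in (\ref{lower est}), I would decompose $f-\sigma^{+}_{n,\varphi}(f)=(B_n/B'_n)S'_n(f)+(f-S_n(f))$ and treat the two summands jointly. The first is bounded via an $H^2$ Cauchy--Schwarz applied to $S'_n(f)(z)=\int_{\mathbb T}f(t)\,\partial_z K_n(z,t)\,dm(t)$; combined with the reproducing-kernel identity for $K_n(z,z)$, multiplication by $|B_n/B'_n|$ yields the first summand $|B_n(z)/B'_n(z)|(1-|B_n(z)|^2)/(1-|z|^2)$ in the stated bound. The second, $|f-S_n(f)|(z)=|B_n(z)||J_n(f)(z)|$, cannot be bounded by $|B_n(z)|^2$ on its own (for $f=B_n$ one has $|J_n(f)|\equiv 1$, so $|f-S_n(f)|(z)=|B_n(z)|$), so the $|B_n(z)|^2$ correction must arise from a cancellation across the two summands before pointwise estimation; this cancellation step is the principal technical obstacle.

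For the assertion $\sup_{f\in\mathcal S}\|f-\sigma^{+}_{n,\varphi}(f)\|_\infty=2$, I would interpret the norm as $\|\cdot\|_{L^\infty(\mathbb T)}$, noting that $\sigma^{+}_{n,\varphi}$ is well-defined on $\mathbb T$ where $|B'_n|>0$. The bound $\le 2$ follows from $\|f\|_\infty\le 1$ together with $\|\sigma^{+}_{n,\varphi}(f)\|_{L^\infty(\mathbb T)}\le 1$, which is established by estimating the boundary values of $\sigma^{+}_{n,\varphi}(f)=f-(B_n/B'_n)\delta_{n,\varphi}(f)$ via the $L^1$-norm of the kernel in the integral representation. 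The reverse inequality is attained by a sequence $f_m\in\mathcal S\cap A(\mathbb D)$ of Blaschke products whose boundary values approach $+1$ on one arc of $\mathbb T$ while $\sigma^{+}_{n,\varphi}(f_m)$ is forced close to $-1$ on the same arc by an appropriate choice of interpolation data, so that $\|f_m-\sigma^{+}_{n,\varphi}(f_m)\|_{A(\mathbb D)}\to 2$.
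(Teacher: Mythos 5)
Your framework (the identity $f-\sigma^{+}_{n,\varphi}(f)=(B_n/B'_n)\delta_{n,\varphi}(f)$, the inclusion $\mathcal S\subset\mathcal K^{+}$, and the Christoffel--Darboux mass identity) is the right one, but the proposal has two genuine gaps and one concrete error. The error is in the lower bound: your candidate $f_*(t)=\mathrm e^{\mathrm i\theta}B_n(t)(t-z)/(1-t\overline z)$ is the extremal of Theorem \ref{Vor thm}, i.e.\ it maximizes $|\delta_{n,\varphi}(f)(z)-f'(z)|$, which is a different functional. For this $f_*$ one computes from (\ref{delta f}) that the correction term equals exactly $\mathrm e^{\mathrm i\theta}B_n(z)/(1-|z|^2)=f_*'(z)$, so $\delta_{n,\varphi}(f_*)(z)=0$ and hence $f_*(z)-\sigma^{+}_{n,\varphi}(f_*)(z)=0$: this is the worst possible test function for your purpose. (Your bookkeeping is also off: $|B_n/B'_n|\cdot|B'_n|(1-|B_n|^2)/(1-|z|^2)=|B_n|(1-|B_n|^2)/(1-|z|^2)$, which is not the stated lower bound.) The correct choice is the bare factor $f(t)=(t-z)/(1-t\overline z)$ \emph{without} $B_n$: then $f(z)=0$, and since $f(t)(\overline t-\overline z)/(1-\overline tz)=1$ on $\mathbb T$, the representation (\ref{represent1})--(\ref{represent2}) gives $|\sigma^{+}_{n,\varphi}(f)(z)|=|B_n(z)/B'_n(z)|\,(1-|B_n(z)|^2)/(1-|z|^2)$ exactly.

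The upper bound in (\ref{lower est}) is left unproved --- you correctly identify that your decomposition $f-\sigma^{+}_{n,\varphi}(f)=(B_n/B'_n)S'_n(f)+(f-S_n(f))$ requires a cancellation you cannot produce, and that is precisely where the proof lives. The missing idea is to compare $\sigma^{+}_{n,\varphi}(f)(z)$ not with $f(z)$ but with $(1-|B_n(z)|^2)f(z)$: since $S_n(fB_n)\equiv 0$, the auxiliary function $F_{f,z}(t)=f(t)(1-\overline{B_n(z)}B_n(t))$ satisfies $\sigma^{+}_{n,\varphi}(F_{f,z})=\sigma^{+}_{n,\varphi}(f)$, and applying (\ref{delta main}) to $F_{f,z}$ yields a kernel whose modulus is exactly $\bigl|\bigl(1-\overline{B_n(t)}B_n(z)\bigr)/(1-\overline tz)\bigr|^2$ with total mass $(1-|B_n(z)|^2)/(1-|z|^2)$; the term $|B_n(z)|^2$ then enters simply as $|B_n(z)|^2|f(z)|\le|B_n(z)|^2$ in the triangle inequality, with no cancellation needed. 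Finally, for $\sup_{f\in\mathcal S}\|f-\sigma^{+}_{n,\varphi}(f)\|_\infty=2$ your upper bound via $\|\sigma^{+}_{n,\varphi}\|=1$ is fine, but the lower bound (``Blaschke products forced close to $-1$ by an appropriate choice of interpolation data'') is only a heuristic; the paper fixes $z\in\mathbb T$, takes $f_\lambda(t)=\overline z(t-\lambda z)/(1-t\lambda\overline z)$, computes $f_\lambda(z)-\sigma^{+}_{n,\varphi}(f_\lambda)(z)=\overline z\frac{1+\lambda}{1-\lambda}\frac{B_n(z)}{B_n'(z)}(1-\overline{B_n(\lambda z)}B_n(z))$ in closed form, and lets $\lambda\to1^-$ using $(1-|B_n(\lambda z)|^2)/(1-\lambda^2)\to|B_n'(z)|$; some such explicit computation is required to finish.
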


In view of this result, it is interesting to obtain the lower estimate of $|f-\sigma^+_{n,\varphi}(f)|$ for an individual function $f$. For this goal we recall that the classical Fej\'er means $\sigma_n(f)$ considered as a method of approximation on space $C(\mathbb T)$ are saturated and its saturation order is $O(n^{-1})$ (see for example \cite{Stepanets}, p. 79). In particular, this means that {\it the relation $\|f-\sigma_n(f)\|_{C(\mathbb T)}=o(n^{-1})$ as $n\to\infty$ holds true only if $f=\mathrm{const}$}. The next corollary gives an analog of this assertion for $\sigma_{n,\varphi}$. 

\begin{col} Suppose $f\in H^\infty$, $\varphi\in TMS$ and let ${\bf a}=\{a_j\}_{j=0}^\infty$ be as in (\ref{phi def}). Then for every $n\in\mathbb N$ we have
\[
\|f-\sigma^+_{n,\varphi}(f)\|_{L^\infty(\mathbb T)}\ge\frac{1}{n}\max_{0\le j\le n-1}\left((1-|a_j|^2)\left|f'(a_j)\right|\right).
\]
	
Moreover, if (\ref{completeness}) is satisfied and if	
\[
\|f-\sigma_{n,\varphi}(f)\|_{L^\infty(\mathbb T)}=o\left(\frac{1}{n}\right),\quad n\to\infty,
\]
then $f=\mathrm{const}$.
\end{col}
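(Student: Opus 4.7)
The plan is to convert the $L^\infty$ bound on $g := f - \sigma^+_{n,\varphi}(f)$ into an $L^1$ bound on the auxiliary function $\delta_{n,\varphi}(f)$, and then to apply the sharp reproducing-kernel inequality in $H^1$ at the interpolation nodes $a_j$. The starting point is the identity $g = (B_n/B_n')\,\delta_{n,\varphi}(f)$ which is built into the definition of $\delta_{n,\varphi}$, together with the interpolation property $\delta_{n,\varphi}(f)(a_j) = f'(a_j)$ supplied by Corollary~\ref{col1}.

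First I would check that for $f \in H^\infty$ the function $\delta_{n,\varphi}(f)$ actually lies in $H^\infty$ (hence in $H^1$): writing $gB_n' = B_n'(f - S_n(f)) + B_n S_n'(f)$ shows $g B_n' \in H^\infty$, while the fact that $g$ vanishes at each $a_j$ (since $S_n(f)(a_j)=f(a_j)$ by the reproducing property of $k_n(\cdot,a_j)=1/(1-z\bar a_j)$ and $B_n(a_j)=0$) makes $\delta_{n,\varphi}(f) = gB_n'/B_n$ holomorphic across the zeros of $B_n$. Since $|B_n|=1$ on $\mathbb T$, this gives the a.e.\ boundary equality $|g(t)|\,|B_n'(t)| = |\delta_{n,\varphi}(f)(t)|$.

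The next step is the identity $\int_\mathbb T |B_n'|\,dm = n$, which is immediate from $|B_n'(t)|=\sum_{k=0}^{n-1}(1-|a_k|^2)/|1-t\bar a_k|^2$ on $\mathbb T$ and the Poisson normalisation $\int_\mathbb T (1-|a|^2)/|1-t\bar a|^2\,dm(t)=1$. Consequently
\[
n\,\|g\|_{L^\infty(\mathbb T)}\;\ge\;\int_\mathbb T |g(t)|\,|B_n'(t)|\,dm(t)\;=\;\|\delta_{n,\varphi}(f)\|_{L^1(\mathbb T)}.
\]
The sharp $H^1$ reproducing-kernel estimate $|h(a)|\le \|h\|_{H^1}/(1-|a|^2)$ (proved by writing $h=B\cdot F^2$ with $B$ a Blaschke product and $F\in H^2$ outer, and using $|h(a)|\le|F(a)|^2\le\|F\|_2^2/(1-|a|^2)=\|h\|_{H^1}/(1-|a|^2)$) applied to $h=\delta_{n,\varphi}(f)$ at $a=a_j$ then yields $\|\delta_{n,\varphi}(f)\|_{L^1(\mathbb T)}\ge (1-|a_j|^2)|f'(a_j)|$. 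Combining the two inequalities and taking the maximum over $0\le j\le n-1$ gives the first assertion.

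For the saturation statement, one uses that $\sigma_{n,\varphi}(f)=\sigma^+_{n,\varphi}(f)$ on $H^\infty\subset H^1$ (as remarked in the introduction), so the hypothesis $\|f-\sigma_{n,\varphi}(f)\|_{L^\infty(\mathbb T)}=o(1/n)$ combined with the displayed inequality forces $\|\delta_{n,\varphi}(f)\|_{L^1(\mathbb T)}\to 0$. Another application of the reproducing-kernel bound then gives $\delta_{n,\varphi}(f)(z)\to 0$ for every $z\in\mathbb D$. On the other hand, Corollary~\ref{col1}, whose second part needs exactly the completeness hypothesis (\ref{completeness}), yields $\delta_{n,\varphi}(f)(z)\to f'(z)$ uniformly on compact subsets of $\mathbb D$. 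Hence $f'\equiv 0$ and $f$ is constant. The main technical hurdle is the verification that $\delta_{n,\varphi}(f)\in H^1$ rather than merely meromorphic in $\mathbb D$; once this is in hand, the rest is a routine Poisson-kernel computation followed by the classical $H^1$ factorisation argument.
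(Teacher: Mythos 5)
Your proposal is correct and follows essentially the same route as the paper: you establish $\delta_{n,\varphi}(f)\in H^\infty$ via the decomposition $\delta_{n,\varphi}(f)=B_n'\,(f-S_n(f))/B_n+S_n'(f)$ and the vanishing of $f-S_n(f)$ at the nodes, use $\|B_n'\|_{L^1(\mathbb T)}=n$ to pass from the $L^\infty$ error to $\|\delta_{n,\varphi}(f)\|_{L^1}$, apply the pointwise $H^1$ bound $(1-|a|^2)|h(a)|\le\|h\|_1$ at $a=a_j$ together with $\delta_{n,\varphi}(f)(a_j)=f'(a_j)$ from Corollary~\ref{col1}, and derive the saturation statement from the second part of that corollary. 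The only cosmetic difference is that you supply the factorization proof of the reproducing-kernel inequality which the paper simply cites.
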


For proving we fix $n\in\mathbb N$ and consider the function 
\[
F:=\frac{f-S_n(f)}{B_n}.
\] 
Since $f(a_j)-S_n(f)(a_j)=0$ for $j=0,\ldots,n-1$, we have (see \cite{Garnett}, p. 53) that $F$ belongs to $H^\infty$ space. On the other side,  $\delta_{n,\varphi}(f)=B'_nF+S'_n(f)$. Therefore $\delta_{n,\varphi}(f)\in H^\infty$ and, consequently, we can apply the following well-known inequality (see \cite{Garnett}, p. 85) 
\[
(1-|z|^2)|\delta_{n,\varphi}(f)(z)|\le\|\delta_{n,\varphi}(f)\|_1,\quad z\in\mathbb D.
\]
Hence, from Corollary \ref{col1} we get
\begin{eqnarray*}
	(1-|a_j|^2)|f'(a_j)|&=&(1-|a_j|^2)|\delta_{n, \varphi}(f)(a_j)|\\
	&\le&\|\delta_{n, \varphi}(f)\|_1\\
	&\le&\left\|\frac{B'_n}{B_n}(f-\sigma_{n,\varphi}(f))\right\|_1\\
	&\le&\|B'_n\|_1\|f-\sigma_{n,\varphi}(f)\|_{L^\infty(\mathbb T)}\\
	&=&n\|f-\sigma_{n,\varphi}(f)\|_{L^\infty(\mathbb T)}.
\end{eqnarray*}
Here we used the following identity
\begin{eqnarray*}
	\|B'_n\|_1&=&\sum_{j=0}^{n-1}\int_\mathbb T\frac{1-|a_j|^2}{|1-t\overline a_j|^2}dm(t)\\
	&=&n.
\end{eqnarray*}

The second part of assertion follows from the inequality (as above)
\[
(1-|z|^2)|\delta_{n, \varphi}(f)(z)|\le n\|f-\sigma_{n,\varphi}(f)\|_1,\quad\forall z\in\mathbb D
\] 
by Corollary \ref{col1}.

\section{Proofs}\label{proofs}

Our approach in proving the main results is based on the following lemmas concerning the some properties of operators $\sigma_{n,\varphi}$ and $\sigma^+_{n,\varphi}$.

\subsection{Auxiliary lemmas}

All lemmas in this subsection are new and of independent interest.   

We begin with the integral representation for $\delta_{n, \varphi}$ from which follows that $\delta_{n, \varphi}(f)$ is holomorphic in $\mathbb D$.

\begin{lem}\label{lem1}
	Suppose that $\varphi\in TMS$ and $f\in\mathcal K$. Then the function $\delta_{n, \varphi}(f)$ is holomorphic in $\mathbb D$. 
	
	Moreover, if $f$ is the Cauchy transform of $\mu\in M$, then for all $n\in\mathbb Z_+$ and $z\in\mathbb D$, 
	\begin{eqnarray}
	\delta_{n, \varphi}(f)(z)&=&f'(z)-B_n(z)\int_\mathbb T\frac{\overline t-\overline z}{1-\overline tz}\frac{\overline{B_n(t)}}{|1-\overline tz|^2}d\mu(t)\label{delta f}\\
	&=&\int_\mathbb T\frac{\overline t-\overline z}{1-\overline tz}\frac{1-\overline{B_n(t)}B_n(z)}{|1-\overline tz|^2}d\mu(t)\label{delta main}.
	\end{eqnarray}
\end{lem}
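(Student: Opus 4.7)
My plan is to reduce $\delta_{n,\varphi}(f)$ to a convenient form for $n\geq 1$ (the case $n=0$ is trivial from both definitions) and derive the two integral representations, from which holomorphy is evident. First, by the definitions of $\delta_{n,\varphi}$ and $\sigma^+_{n,\varphi}$,
$$\delta_{n,\varphi}(f) = \frac{B_n'}{B_n}\bigl(f-\sigma^+_{n,\varphi}(f)\bigr) = \frac{B_n'}{B_n}(f-S_n(f))+S_n'(f).$$
Using the Christoffel--Darboux identity (\ref{Djarbashian}) together with the definition $\langle f,\varphi_k\rangle=\int_\mathbb{T}\overline{\varphi_k}\,d\mu$ for $f\in\mathcal K$, I would swap sum and integral to obtain
$$S_n(f)(z)=\int_\mathbb{T}\frac{1-B_n(z)\overline{B_n(t)}}{1-z\overline t}\,d\mu(t),$$
and subtracting from $f(z)=\int_\mathbb{T}(1-z\overline t)^{-1}d\mu(t)$ gives
$$f(z)-S_n(f)(z)=B_n(z)\int_\mathbb{T}\frac{\overline{B_n(t)}}{1-z\overline t}\,d\mu(t).$$
This exhibits $(f-S_n(f))/B_n$ as a holomorphic function in $\mathbb{D}$, so $\delta_{n,\varphi}(f)=B_n'\cdot(f-S_n(f))/B_n+S_n'(f)$ is holomorphic in $\mathbb{D}$ as well, proving the first assertion.

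For (\ref{delta f}), I would differentiate the integral representation of $S_n(f)$ under the integral sign. By the quotient rule this splits into three summands: one equals $-B_n'(z)\int_\mathbb{T}\overline{B_n(t)}(1-z\overline t)^{-1}d\mu(t)$ and cancels exactly against $(B_n'/B_n)(f-S_n(f))$; a second collapses to $f'(z)=\int_\mathbb{T}\overline t(1-z\overline t)^{-2}d\mu(t)$; and the third is $-B_n(z)\int_\mathbb{T}\overline t\,\overline{B_n(t)}(1-z\overline t)^{-2}d\mu(t)$. To recast the result in the form displayed in (\ref{delta f}), which involves the non-holomorphic factor $|1-\overline t z|^2$, I would invoke the pointwise identity
$$\frac{\overline t-\overline z}{1-\overline t z}\cdot\frac{1}{|1-\overline t z|^2}=\frac{\overline t}{(1-\overline t z)^2},\qquad t\in\mathbb{T},$$
which follows from $\overline t=1/t$, hence $\overline t-\overline z=(1-t\overline z)/t$ and $|1-\overline t z|^2=(1-\overline t z)(1-t\overline z)$. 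Formula (\ref{delta main}) is then obtained by absorbing $f'(z)$ back into the integral via the same identity applied with $\overline{B_n(t)}$ replaced by $1$, and combining numerators.

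The proof is essentially careful algebraic rearrangement, so no step is genuinely hard. The only conceptual subtlety worth flagging is precisely the identity above: the factor $|1-\overline t z|^2$ in the statement makes the integrand look non-holomorphic, but restricted to $t\in\mathbb{T}$ the combination $(\overline t-\overline z)/[(1-\overline t z)|1-\overline t z|^2]$ collapses to the manifestly holomorphic $\overline t/(1-\overline t z)^2$. Keeping the signs and the cancellation between the $B_n'(z)\int\overline{B_n(t)}(1-z\overline t)^{-1}d\mu(t)$ terms straight during differentiation of $S_n(f)$ is the only place a misstep is likely.
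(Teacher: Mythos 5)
Your proof is correct and follows essentially the same route as the paper: both use the Christoffel--Darboux identity (\ref{Djarbashian}) to write $f-S_n(f)=B_n(z)\int_\mathbb T\overline{B_n(t)}(1-\overline tz)^{-1}d\mu(t)$, differentiate to isolate $f'(z)-B_n(z)\int_\mathbb T\overline{B_n(t)}\,\overline t(1-\overline tz)^{-2}d\mu(t)$, and then invoke the pointwise identity $\overline t(1-\overline tz)^{-2}=\frac{\overline t-\overline z}{1-\overline tz}|1-\overline tz|^{-2}$ on $\mathbb T$. The only (immaterial) difference is that you establish holomorphy directly from the representation of $(f-S_n(f))/B_n$ before deriving the final formulas, whereas the paper reads it off from the finished identity.
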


\begin{proof} Applying (\ref{Djarbashian}) we get 
\begin{equation}\label{delta}
\frac{B'_n(z)}{B_n(z)}\left(f(z)-S_n(f)(z)\right)=B'_n(z)\int_\mathbb T\frac{\overline{B_n(t)}}{1-\overline tz}d\mu(t)
\end{equation}
and
\begin{equation}\label{delta2}
f'(z)-S'_n(f)(z)=B'_n(z)\int_\mathbb T\frac{\overline{B_n(t)}}{1-\overline tz}d\mu(t)+B_n(z)\int_\mathbb T\frac{\overline{B_n(t)t}}{(1-\overline tz)^2}d\mu(t).
\end{equation}
By subtracting (\ref{delta2}) from (\ref{delta}) we get
\[
\delta_{n, \varphi}(f)(z)=f'(z)-B_n(z)\int_\mathbb T\frac{\overline{B_n(t)t}}{(1-\overline tz)^2}d\mu(t).
\]
This means that the function $\delta_{n, \varphi}(f)$ is holomorphic in $\mathbb D$. 

In order to prove (\ref{delta f}) and (\ref{delta main}), it remains to note that 
\[
\frac{\overline t}{(1-\overline tz)^2}=\frac{\overline t-\overline z}{1-\overline tz}\frac{1}{|1-\overline tz|^2},\quad t\in\mathbb T.
\]
\end{proof}

\begin{lem}\label{sigma+ for e0}
	Let $\varphi\in TMS$ and $e_0(t)=1$ for $t\in\mathbb T$. Then for all $n\in\mathbb Z_+$  we have $\sigma^+_{n,\varphi}(e_0)=e_0.$
\end{lem}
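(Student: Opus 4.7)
The plan is to compute $S_n(e_0)$ and $S_n'(e_0)$ explicitly, and then verify that the correction term in the definition of $\sigma^+_{n,\varphi}$ exactly cancels the nontrivial part of $S_n(e_0)$.

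First I would recall the formula for $S_n(e_0)$ already observed in the introduction: applying the Christoffel--Darboux-type identity (\ref{Djarbashian}) to $e_0$ (which lies in $H^2$), one has
\[
S_n(e_0)(z)=\int_\mathbb T\sum_{k=0}^{n-1}\varphi_k(z)\overline{\varphi_k(t)}\,dm(t)
=\int_\mathbb T\frac{1-B_n(z)\overline{B_n(t)}}{1-z\overline t}\,dm(t).
\]
The term $\int_\mathbb T(1-z\overline t)^{-1}dm(t)=1$ by direct series expansion, and $\int_\mathbb T\overline{B_n(t)}/(1-z\overline t)\,dm(t)=\overline{B_n(0)}$ (the value at $0$ of the Cauchy transform of the $H^\infty$ function $\overline{B_n(t)}$ restricted to $\mathbb T$, i.e.\ the zeroth Fourier coefficient of the antiholomorphic function $\overline{B_n}$). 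This gives the compact formula
\[
S_n(e_0)(z)=1-\overline{B_n(0)}\,B_n(z),
\]
which is consistent with the computation already carried out in the introductory example.

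Differentiating in $z$ yields $S_n'(e_0)(z)=-\overline{B_n(0)}\,B_n'(z)$. Then the definition of $\sigma^+_{n,\varphi}$ gives
\[
\sigma^+_{n,\varphi}(e_0)(z)
=\bigl(1-\overline{B_n(0)}B_n(z)\bigr)
-\frac{B_n(z)}{B_n'(z)}\bigl(-\overline{B_n(0)}B_n'(z)\bigr)
=1,
\]
i.e.\ $\sigma^+_{n,\varphi}(e_0)=e_0$, which handles $n\in\mathbb N$; the case $n=0$ is just the definition.

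I do not foresee a real obstacle here: the only thing to be careful about is justifying the identity $S_n(e_0)(z)=1-\overline{B_n(0)}B_n(z)$, which is a one-line application of (\ref{Djarbashian}) and the fact that $\int_\mathbb T \bar t^{\,m}\,dm(t)=0$ for $m\ge 1$. Once that formula is in hand, the cancellation in the definition of $\sigma^+_{n,\varphi}$ is automatic.
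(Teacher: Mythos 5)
Your proof is correct, and it takes a genuinely different route from the paper. You compute $S_n(e_0)(z)=1-\overline{B_n(0)}B_n(z)$ directly from the Christoffel--Darboux identity (\ref{Djarbashian}) (equivalently, $\langle e_0,\varphi_k\rangle=\overline{\varphi_k(0)}$ by the mean value property, then set $t=0$ in (\ref{Djarbashian})), differentiate, and watch the correction term cancel; the identity
\[
\sigma^+_{n,\varphi}(e_0)(z)=1-\overline{B_n(0)}B_n(z)+\frac{B_n(z)}{B_n'(z)}\,\overline{B_n(0)}B_n'(z)=1
\]
is then immediate, and since the product $\frac{B_n}{B_n'}S_n'(e_0)=-\overline{B_n(0)}B_n$ is regular even at the $n-1$ zeros of $B_n'$ in $\mathbb D$, no exceptional points need to be excluded. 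The paper instead routes through Lemma \ref{lem1}: it shows $\delta_{n,\varphi}(e_0)\equiv 0$ by applying the Poisson integral formula to the holomorphic function $t\mapsto(t-z)\bigl(1-B_n(t)\overline{B_n(z)}\bigr)/(1-t\overline z)$, and then divides by $B_n'/B_n$, which requires the remark that $B_n'$ has only finitely many zeros in $\mathbb D$. Both arguments ultimately rest on (\ref{Djarbashian}); yours is more elementary and self-contained and makes the cancellation transparent, while the paper's version exercises the integral representation (\ref{delta main}) that it needs anyway for Lemmas \ref{Fejer kern} and \ref{lem3}, so it fits more naturally into the machinery being assembled. Either proof is acceptable.
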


\begin{proof} By the Lemma \ref{lem1} (see (\ref{delta main}) for $d\mu=e_0dm$), according to the Poisson integral formula, applying to the holomorphic function $t\mapsto(t-z)(1-B_n(t)\overline{B_n(z)})/(1-t\overline z)$, we have 
\begin{eqnarray*}
	\overline{B'_n(z)}\overline{\left(\frac{e_0(z)-\sigma^+_{n,\varphi}(e_0)(z)}{B_n(z)}\right)}
	&=&\overline{\delta_{n, \varphi}(e_0)(z)}\\&=&\int_\mathbb T\frac{t-z}{1-t\overline z}\frac{1-B_n(t)\overline{B_n(z)}}{|1-\overline tz|^2}dm(t)\\
	&=&0,~\forall z\in\mathbb D.
\end{eqnarray*}
The result follows, since $B'_n$ has precisely $n-1$ zeros in $\mathbb D$ (see \cite{Mashreghi}, p. 41).
\end{proof}

\begin{lem}\label{lim B}
	Let $B$ be a finite Blacshke product for the disc $\mathbb D$ and let $f\in H^1$. Then for every $z\in\mathbb T$ we have
	\[
	\lim_{\rho\to 1-}\left(1-|B(\rho z)|^2\right)f(\rho z)=0.
	\] 
\end{lem}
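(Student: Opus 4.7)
The plan is to estimate the two factors $1 - |B(\rho z)|^2$ and $f(\rho z)$ separately and combine at the end.

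Because $B$ is a \emph{finite} Blaschke product, Schwarz reflection through $\mathbb T$ extends $B$ to a rational function on the Riemann sphere; in particular $|B|^2$ is real-analytic in a neighborhood of $\overline{\mathbb D}$ and takes the value $1$ on $\mathbb T$. A one-variable Taylor expansion in $\rho$ at $\rho = 1$ therefore yields
\[
1 - |B(\rho z)|^2 = 2(1-\rho)\,\mathrm{Re}\bigl(\overline{B(z)}\,zB'(z)\bigr) + O\bigl((1-\rho)^2\bigr),
\]
whose leading coefficient is continuous, hence bounded, on $\mathbb T$. This provides a constant $C_B$ with $1 - |B(\rho z)|^2 \le C_B(1-\rho)$ uniformly in $z\in\mathbb T$.

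For the second factor, I would establish the stronger, uniform statement that for every $f \in H^1$,
\[
\sup_{|z|=\rho}(1-\rho^2)\,|f(z)| \longrightarrow 0, \qquad \rho \to 1^-.
\]
Invoking the inner--outer factorization $f = I\cdot O$ and writing the zero-free outer factor as a square, one obtains a factorization $f = gh$ with $g,h\in H^2$. For any $h\in H^2$ with power series $\sum c_k z^k$, Cauchy--Schwarz gives $(1-|z|^2)^{1/2}|h(z)| \le \|h\|_2$. To promote this bound to $o(1)$ I would split $h = p_N + r_N$ into its partial sum and tail; the same estimate applied to $r_N$ yields $(1-|z|^2)^{1/2}|r_N(z)| \le \bigl(\sum_{k\ge N}|c_k|^2\bigr)^{1/2}$, which can be made arbitrarily small by choosing $N$ large, while $p_N$ is bounded on $\overline{\mathbb D}$, so $(1-|z|^2)^{1/2}|p_N(z)| \to 0$ uniformly. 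Multiplying the resulting uniform $o(1)$ estimates for $g$ and $h$ gives the claimed decay of $(1-|z|^2)|f(z)|$.

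Combining the two steps, for any fixed $z\in\mathbb T$,
\[
(1 - |B(\rho z)|^2)\,|f(\rho z)| \le C_B(1-\rho)\,|f(\rho z)| \le C_B(1-\rho^2)\,|f(\rho z)| \longrightarrow 0.
\]
The main obstacle is the second step: the bare Cauchy--Schwarz inequality for $H^2$ only gives \emph{boundedness} of $(1-|z|^2)^{1/2}|h(z)|$, so one must invoke density of polynomials in $H^2$ to upgrade the estimate to genuine decay. The growth bound for $1 - |B|^2$ is then routine because $B$ is finite.
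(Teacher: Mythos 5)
Your proof is correct, but it follows a genuinely different route from the paper's. The paper handles the Blaschke factor by citing the exact boundary limit $\lim_{\rho\to1^-}\bigl(1-|B(\rho z)|^2\bigr)/\bigl(1-|\rho z|^2\bigr)=|B'(z)|>0$, which reduces the lemma to the case $B(w)=w$; it then disposes of that case in two lines by writing $(1-\rho^2)f(\rho z)=\int_{\mathbb T}f(t)\frac{1-\rho\overline z t}{1-\rho z\overline t}\,dm(t)$ (Cauchy's formula applied to $t\mapsto f(t)(1-\rho\overline z t)$), noting the kernel is unimodular, and passing to the limit by dominated convergence to get $-\overline z\int_{\mathbb T}f(t)t\,dm(t)=0$. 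You instead use only the one-sided bound $1-|B(\rho z)|^2\le C_B(1-\rho)$ — which is elementary for a finite Blaschke product and does not require the angular-derivative identity — and you prove the core fact $(1-\rho^2)|f|\to0$ in a \emph{uniform} form via the Riesz factorization $f=gh$ with $g,h\in H^2$ and a partial-sum/tail argument in $H^2$. All steps check out: the factorization is the standard $H^1=H^2\cdot H^2$ decomposition, the Cauchy--Schwarz tail estimate $(1-|z|^2)^{1/2}|r_N(z)|\le\bigl(\sum_{k\ge N}|c_k|^2\bigr)^{1/2}$ is right, and $1-\rho\le1-\rho^2$ closes the argument. What your approach buys is a stronger conclusion (uniform decay of $(1-|w|^2)|f(w)|$ over the whole circle $|w|=\rho$, not just radial decay at a fixed $z$) and independence from the Julia--Carath\'eodory/angular-derivative machinery; what the paper's buys is brevity and the fact that the same Cauchy-formula trick reappears elsewhere in their computations. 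Either argument is a valid proof of the lemma.
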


\begin{proof} 
Since
\[
\lim_{\rho\to 1-}\frac{1-|B(\rho z)|^2}{1-|\rho z|^2}=|B'(z)|>0
\]
for every $z\in\mathbb T$ (see \cite{Mashreghi}, p. 45), it is enough to prove the lemma in the case $B(z)=z$.

By Cauchy integral formula, applying to the function $t\mapsto f(t)(1-\rho\overline zt)$, we have
\[
\left(1-\rho^2\right)f(\rho z)=\int_\mathbb Tf(t)\frac{1-\rho\overline zt}{1-\rho z\overline t}dm(t).
\]
Since
\[
\lim_{\rho\to 1-}\frac{1-\rho\overline zt}{1-\rho z\overline t}=
\begin{cases}
-\overline zt,\hfill&\mbox{if}~t\not=z,\\
1,\hfill&\mbox{if}~t=z,
\end{cases}
\]
by Lebesgue's dominated convergence theorem we get
\begin{eqnarray*}
	\lim_{\rho\to 1-}\left(1-\rho^2\right)f(\rho z)&=&\lim_{\rho\to 1-}\int_\mathbb Tf(t)\frac{1-\rho\overline zt}{1-\rho z\overline t}dm(t)\\
	&=&-\overline z\int_\mathbb Tf(t)tdm(t)\\
	&=&0.
\end{eqnarray*}
\end{proof}

For $\varphi\in TMS$ we define the kernels $\mathcal F_{n,\varphi}(t, z)$, $n\in\mathbb Z_+$, as the functions on $\mathbb T^2$ by
\[
\mathcal F_{n,\varphi}(t, z):=
\begin{cases}
1,\hfil&\mbox{if}~n=0,\cr
\displaystyle2\mathrm{Re}\sum_{k=0}^{n-1}\left(\varphi_k(z)-\frac{B_n(z)}{B'_n(z)}\varphi'_k(z)\right)\overline{\varphi_k(t)}-1,\hfill&\mbox{if}~ n\in\mathbb N.
\end{cases}
\]
Since $B'_n(z)\not=0$ for $z\in\mathbb T$ and $n\in\mathbb N$ (see \cite{Mashreghi}, p. 40), the kernels $\mathcal F_{n,\varphi}(t, z)$ are well defined.

We call $\mathcal F_{n,\varphi}$ \textit{the Fej\'{e}r--type kernel of order $n$} based on the system $\varphi\in TMS$. The reason for this is that in case $\varphi_k(t)=t^k$, $\mathcal F_{n,\varphi}$ coincide with the Fej\'{e}r kernel
\[
F_n(\overline tz):=
\begin{cases}
1,\hfill&\mbox{if}~n=0,\cr
\displaystyle
2\mathrm{Re}\sum_{k=0}^{n-1}\left(1-\frac{k}{n}\right)z^k\overline t^k-1,\hfill&\mbox{if}~n\in\mathbb N.
\end{cases}
\]
In addition, as we will show in the following Lemma \ref{Fejer kern}, $\mathcal F_{n,\varphi}$ is positive kernel with
\begin{equation}\label{mean value}
\int_\mathbb T\mathcal F_{n,\varphi}(t, z)dm(t)=1,\quad z\in\mathbb T.
\end{equation}

\begin{lem}\label{Fejer kern}
	Suppose that $\varphi\in TMS$, $f\in H^1$ and $n\in\mathbb N$. Then for every $z\in\mathbb T$ we have
	\begin{eqnarray}\label{Fejer+}
	\sigma^+_{n,\varphi}(f)(z)&=&c_n(z)\int_\mathbb Tf(t)\left|\frac{B_n(t)-B_n(z)}{t-z}\right|^2dm(t)\\
	&=&\sigma_{n,\varphi}(f)(z),\label{Fejer kern 2}
	\end{eqnarray}
	where
	\begin{equation}\label{cn def}
	c_n(z):=\frac{1}{\sum_{k=0}^{n-1}|\varphi_k(z)|^2}=\frac{1}{|B'_n(z)|}.
	\end{equation}
	Moreover, if $(t,z)\in\mathbb T^2$ and $x=\arg z,$ $y=\arg t$ we have
	\begin{eqnarray}\label{F=||^2}
	\mathcal F_{n,\varphi}(t, z)&=&c_n(z)\left|\frac{B_n(t)-B_n(z)}{t-z}\right|^2\\
	&=&\frac{1}{2\gamma_n(x)}\cdot\frac{\displaystyle\sin^2\left(\int_x^y\gamma_n(t)dt\right)}{\displaystyle\sin^2\frac{y-x}{2}}.\nonumber
	\end{eqnarray}
\end{lem}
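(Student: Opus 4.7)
The plan is to prove the closed-form expressions for $\mathcal F_{n,\varphi}$ (the last display of the lemma) first, then use them to establish the integral representation for $\sigma^+_{n,\varphi}(f)$, and finally observe that $\sigma^+_{n,\varphi}(f)=\sigma_{n,\varphi}(f)$ as an immediate corollary.

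To compute $\mathcal F_{n,\varphi}$, I will substitute the Djrbashian identity (\ref{Djarbashian}) together with its $z$-derivative into the definition of $\mathcal F_{n,\varphi}$. A direct calculation reduces the expression under $2\,\mathrm{Re}$ to the sum of a piece $1/(1-z\bar t)$ and a remainder $-B_n(z)\bar t(1-B_n(z)\overline{B_n(t)})/(B'_n(z)(1-z\bar t)^2)$. A short check on $\mathbb T^2$ gives $2\,\mathrm{Re}[1/(1-z\bar t)]=1$, which exactly cancels the constant $-1$ in the definition. The remainder is then simplified using three boundary identities: $|1-z\bar t|^2=|t-z|^2$, $(1-z\bar t)^2=-z\bar t\,|t-z|^2$, and the log-derivative identity $zB'_n(z)=B_n(z)|B'_n(z)|$ for $z\in\mathbb T$, which follows from the expansion $B'_n/B_n=\sum_j(1-|a_j|^2)/((z-a_j)(1-z\bar a_j))$ combined with $(z-a_j)(1-z\bar a_j)=z|1-z\bar a_j|^2$ on the circle (and which also identifies $c_n(z)=1/\sum_k|\varphi_k(z)|^2$ with $1/|B'_n(z)|$). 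These collapse the expression to $c_n(z)(2-2\,\mathrm{Re}(B_n(z)\overline{B_n(t)}))/|t-z|^2=c_n(z)|(B_n(t)-B_n(z))/(t-z)|^2$. For the trigonometric form I parametrize $z=\mathrm e^{\mathrm ix}$, $t=\mathrm e^{\mathrm iy}$, use $|t-z|^2=4\sin^2((y-x)/2)$, and write $B_n(\mathrm e^{\mathrm is})=\mathrm e^{\mathrm i\Phi(s)}$; then $\Phi'(s)=\mathrm e^{\mathrm is}B'_n(\mathrm e^{\mathrm is})/B_n(\mathrm e^{\mathrm is})=|B'_n(\mathrm e^{\mathrm is})|=2\gamma_n(s)$ yields $|B_n(t)-B_n(z)|^2=4\sin^2(\int_x^y\gamma_n(s)\,ds)$.

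For the integral representation of $\sigma^+_{n,\varphi}$, I will write $\sigma^+_{n,\varphi}(f)(z)=\int_\mathbb T f(t)K(t,z)\,dm(t)$ with $K(t,z):=\sum_{k=0}^{n-1}\varphi_k(z)\overline{\varphi_k(t)}-(B_n(z)/B'_n(z))\sum_{k=0}^{n-1}\varphi'_k(z)\overline{\varphi_k(t)}$, so that by the very definition of $\mathcal F_{n,\varphi}$ one has $K-\mathcal F_{n,\varphi}=1-\overline K$. The crucial observation is that $t\mapsto\overline{K(t,z)}$ is a finite linear combination of the holomorphic functions $\varphi_k(t)$, hence belongs to $H^\infty(\mathbb D)$. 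For $f\in H^1$ the mean-value property then gives $\int_\mathbb T f(t)\overline{K(t,z)}\,dm(t)=f(0)\overline{K(0,z)}$, and a direct calculation using the Djrbashian identity at $t=0$ together with its $z$-derivative ($\sum\varphi_k(z)\overline{\varphi_k(0)}=1-B_n(z)\overline{B_n(0)}$ and $\sum\varphi'_k(z)\overline{\varphi_k(0)}=-B'_n(z)\overline{B_n(0)}$) gives $K(0,z)=1$. Hence $\int f(1-\overline K)\,dm=f(0)-f(0)=0$ and $\sigma^+_{n,\varphi}(f)(z)=\int_\mathbb T f\,\mathcal F_{n,\varphi}\,dm$, which by the previous step equals $c_n(z)\int_\mathbb T f(t)|(B_n(t)-B_n(z))/(t-z)|^2\,dm(t)$.

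The equality $\sigma^+_{n,\varphi}(f)=\sigma_{n,\varphi}(f)$ is then immediate: the definition of $\sigma_{n,\varphi}(f)(z)$, rewritten with $dm=dy/(2\pi)$, becomes $(1/(2\gamma_n(x)))\int_\mathbb T f(t)\sin^2(\int_x^y\gamma_n)/\sin^2((y-x)/2)\,dm(t)$, which by the trigonometric form of $\mathcal F_{n,\varphi}$ equals $\int_\mathbb T f\,\mathcal F_{n,\varphi}\,dm$. The main obstacle is the algebraic bookkeeping in the first step: the three boundary identities, especially $zB'_n(z)=B_n(z)|B'_n(z)|$, must be assembled carefully to turn the definition of $\mathcal F_{n,\varphi}$ into a squared Fej\'er form. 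The second step, by contrast, rests on the single elegant identity $K(0,z)=1$, after which the Cauchy mean-value property eliminates the anti-Fej\'er remainder on $H^1$ in one stroke.
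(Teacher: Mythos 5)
Your proof is correct, but it reaches the kernel identity by a genuinely different route than the paper. The paper first proves \eqref{Fejer+} by an interior-to-boundary argument: it applies the integral representation \eqref{delta main} to the modified function $F_{f,z}(t)=f(t)(1-\overline{B_n(z)}B_n(t))$ (noting $\sigma^+_{n,\varphi}(F_{f,z})=\sigma^+_{n,\varphi}(f)$), obtains \eqref{represent1}--\eqref{represent2} for $z\in\mathbb D$, and then lets $|z|\to1$ using Lemma \ref{lim B} and dominated convergence, identifying the normalizing constant via Parseval and Lemma \ref{sigma+ for e0}; the identity $\mathcal F_{n,\varphi}=c_n|\cdot|^2$ is then obtained only indirectly, by showing that the difference $\Delta_{n,z}$ annihilates $H^\infty$ and is real-valued, hence vanishes by the Schwarz integral formula. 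You instead prove the pointwise kernel identity \eqref{F=||^2} head-on, by substituting the Christoffel--Darboux identity \eqref{Djarbashian} and its $z$-derivative into the definition of $\mathcal F_{n,\varphi}$ and simplifying on $\mathbb T^2$ with the boundary identities $(1-z\overline t)^2=-z\overline t\,|t-z|^2$ and $zB'_n(z)=B_n(z)|B'_n(z)|$ (all of which I checked); the only step you share with the paper is the computation $g(0)=1$ for the anti-analytic part of the kernel, after which the mean-value property gives \eqref{Fejer+}. Your route is more elementary and self-contained: it avoids the boundary-limit machinery and the $H^1$/Schwarz duality argument, makes the positivity of $\mathcal F_{n,\varphi}$ transparent, and -- unlike the paper, which never explicitly derives the trigonometric form of the kernel -- actually completes the identification with $\sigma_{n,\varphi}$ via $\Phi'(s)=|B'_n(\mathrm e^{\mathrm is})|=2\gamma_n(s)$. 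What the paper's longer detour buys is the interior representation \eqref{represent1}--\eqref{represent2}, which it reuses later in the proofs of Lemma \ref{lem3} and Theorem \ref{Th3}, so if you adopted your argument in the paper's context those formulas would still need to be established separately.
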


\begin{proof} Fix $z\in\mathbb D$ and consider the holomorphic function $F_{f,z}(t):=f(t)(1-\overline{B_n(z)}B_n(t))$. 
Observe that
\begin{equation}\label{sigms(F)=sigma(f)}
\sigma^+_{n, \varphi}(F_{f,z})=\sigma^+_{n, \varphi}(f).
\end{equation} 
This follows easily from the equalities
\begin{eqnarray*}
	S_n(fB_n)(z)&=&\int_\mathbb Tf(t)B_n(t)\frac{1-B_n(z)\overline{B_n(t)}}{1-\overline tz}dm(t)\\
	&=&\int_\mathbb Tf(t)(B_n(t)-B_n(z))\frac{1}{1-\overline tz}dm(t)\\
	&=&0,\quad\forall~z\in\mathbb D.
\end{eqnarray*}

Thus applying (\ref{delta main}) to $F_{f,z}$  and taken into account (\ref{sigms(F)=sigma(f)}), we get
\begin{eqnarray}\label{represent1}
\frac{B'_n(z)}{B_n(z)}\left((1\right.&-&\left.|B_n(z)|^2)f(z)-\sigma^+_{n, \varphi}(f)(z)\right)\nonumber\\
&=&\int_\mathbb Tf(t)\frac{\overline t-\overline z}{1-\overline tz}\left|\frac{1-\overline{B_n(t)}B_n(z)}{1-\overline tz}\right|^2dm(t)\\
&=&\int_\mathbb Tf(t)\frac{\overline t-\overline z}{1-\overline tz}\left|\sum_{k=0}^{n-1}\varphi_k(z)\overline{\varphi_k(t)}\right|^2dm(t).\label{represent2}
\end{eqnarray}
Since for $(t, z)\in\mathbb T^2$,
\[
\lim_{\rho\to 1-}\frac{\overline t-\rho\overline z}{1-\overline t\rho z}=
\begin{cases}
-\overline z,\hfill&\mbox{if}~t\not=z,\\
\overline z,\hfill&\mbox{if}~t=z,
\end{cases}
\]
we can go to the limit as $|z|\to 1$ in both side of (\ref{represent1}) and (\ref{represent2}). As a result, by Lemma \ref{lim B} and by Lebesgue's dominated convergence theorem we get
\begin{eqnarray*}
	\frac{zB'_n(z)}{B_n(z)}\sigma^+_{n,\varphi}(f)(z)&=&\int_\mathbb Tf(t)\left|\frac{B_n(t)-B_n(z)}{t-z}\right|^2dm(t)\\
	&=&\int_\mathbb T f(t)\left|\sum_{k=0}^{n-1}\varphi_k(z)\overline{\varphi_k(t)}\right|^2dm(t),\quad\forall z\in\mathbb T.
\end{eqnarray*}
By Lemma \ref{sigma+ for e0} and the Parseval identity, the last equalities implies
\begin{eqnarray*}
	\frac{zB'_n(z)}{B_n(z)}&=&\frac{zB'_n(z)}{B_n(z)}\sigma^+_{n,\varphi}(e_0(z))\\
	&=&\int_\mathbb T\left|\frac{1-B_n(z)\overline{B_n(t)}}{1-z\overline t}\right|^2dm(t)\\
	&=&\int_\mathbb T \left|\sum_{k=0}^{n-1}\varphi_k(z)\overline{\varphi_k(t)}\right|^2dm(t)\\
	&=&\sum_{k=0}^{n-1}|\varphi_k(z)|^2.
\end{eqnarray*}
Since $zB'_n(z)/B_n(z)=|B'_n(z)|$ (see \cite{Mashreghi}, p. 40), (\ref{Fejer+}) follows.

In order to prove (\ref{Fejer kern 2}) and (\ref{F=||^2}), we fix $z\in\mathbb T$ and  consider the holomorphic function
\[
g(t)=\sum_{k=0}^{n-1}\overline{\left(\varphi_k(z)-\frac{B_n(z)}{B'_n(z)}\varphi'_k(z)\right)}\varphi_k(t),\quad t\in\mathbb D.
\]
Differentiating (\ref{Djarbashian}) with respect to $z$ and then taking the conjugate, we obtaine the formula
\[
\sum_{k=0}^{n-1}\overline{\varphi'_k(z)}\varphi_k(t)=-\frac{\overline{B'_n(z)}B_n(t)}{1-\overline zt}+\frac{(1-\overline{B_n(z)}B_n(t))t}{(1-\overline zt)^2}.
\]
This formula and (\ref{Djarbashian}) together give us  
\begin{eqnarray*}
	g(0)&=&1-\overline{B_n(z)}B_n(0)+\overline{\frac{B_n(z)}{B'_n(z)}B'_n(z)}B_n(0)\\
	&=&1,
\end{eqnarray*}
and consequently, \begin{eqnarray*}
	\int_\mathbb Tf\cdot(g-1)dm&=&f(0)(g(0)-1)\\
	&=&0,\quad\forall~f\in H^1.
\end{eqnarray*}
Therefore,
\begin{eqnarray*}
	\sigma^+_{n,\varphi}(f)(z)&=&\int_\mathbb Tf(t)\overline{g(t)}dm(t)\\
	&=&\int_\mathbb Tf(t)\left(\overline{g(t)}+g(t)-1\right)dm(t)\\
	&=&\int_\mathbb Tf(t)\mathcal F_{n,\varphi}(t,z)dm(t)
\end{eqnarray*}
and (\ref{Fejer kern 2}) is proved.

Now let us consider the difference 
\[
\Delta_{n,z}(t):=\mathcal F_{n,\varphi}(t,z)-c_n(z)\left|\sum_{k=0}^{n-1}\varphi_k(z)\overline{\varphi_k(t)}\right|^2
\]
as the function defined on $\mathbb T$. From (\ref{Fejer+}) and (\ref{Fejer kern 2}) we see that $\Delta_{n,z}$	
satisfy 
\[
\int_\mathbb Tf\Delta_{n,z}dm=0
\] 
for every $f\in H^\infty$. This means that $\Delta_{n,z}$ is the nontangential limits of some function from $H^1$, say $\Delta_{n,z}$, with $\Delta_{n,z}(0)=0$ (see \cite{Garnett}, p. 85). But $\mathrm{Im}\Delta_{n,z}(t)=0$. Thus by Schwarz integral formula, $\Delta_{n,z}(t)=0$ for every $t\in\overline{\mathbb D}$. 
\end{proof}

\begin{lem}\label{norm Fejer}
	Suppose that $X(\mathbb T)$ is one of space $L^p(\mathbb T)$ and $C(\mathbb T)$ and that $X_+(\mathbb D)$ is one of space $H^p$ and $A(\mathbb D)$. Then for any $\varphi\in TMS$ and  for $n\in\mathbb N$ we have
	\[
	\|\sigma_{n,\varphi}\|_{X(\mathbb T)\rightarrow X(\mathbb T)}=\|\sigma^+_{n,\varphi}\|_{X_+(\mathbb D)\rightarrow X_+(\mathbb D)}=1.
	\]
\end{lem}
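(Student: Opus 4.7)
The argument builds on the positive integral representation
\[
\sigma_{n,\varphi}(f)(z)=\int_{\mathbb T}f(t)\,\mathcal F_{n,\varphi}(t,z)\,dm(t),\quad z\in\mathbb T,
\]
furnished by Lemma \ref{Fejer kern}, together with two properties of the Fej\'er-type kernel: $\mathcal F_{n,\varphi}(t,z)\ge 0$ (immediate from the factored form in (\ref{F=||^2})) and $\int_{\mathbb T}\mathcal F_{n,\varphi}(t,z)\,dm(t)=1$ for every $z\in\mathbb T$, which is (\ref{mean value}). The mass-one identity also follows from Lemma \ref{sigma+ for e0}: since $e_0\in H^1$, identity (\ref{Fejer kern 2}) forces $\sigma_{n,\varphi}(e_0)=\sigma^+_{n,\varphi}(e_0)=e_0$ on $\mathbb T$.

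\textbf{Norm of $\sigma_{n,\varphi}$ on $X(\mathbb T)$.} For $X(\mathbb T)=C(\mathbb T)$ the pointwise bound $|\sigma_{n,\varphi}(f)(z)|\le \|f\|_{C(\mathbb T)}$ follows at once from nonnegativity and mass one. For $X(\mathbb T)=L^p(\mathbb T)$, $1\le p<\infty$, I apply Jensen's inequality in the variable $t$ with respect to the probability measure $\mathcal F_{n,\varphi}(t,z)\,dm(t)$, obtaining
\[
|\sigma_{n,\varphi}(f)(z)|^p\le \int_{\mathbb T}|f(t)|^p\,\mathcal F_{n,\varphi}(t,z)\,dm(t)=\sigma_{n,\varphi}(|f|^p)(z),
\]
and then integrate in $z$ and invoke Fubini. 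The reverse inequality $\ge 1$ is achieved at $f=e_0$: since $\sigma_{n,\varphi}(e_0)=e_0$, the norm ratio equals $1$.

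\textbf{Transfer to $\sigma^+_{n,\varphi}$ on $X_+(\mathbb D)$.} The isometric embeddings $H^p\hookrightarrow L^p(\mathbb T)$ and $A(\mathbb D)\hookrightarrow C(\mathbb T)$ via boundary values reduce the norm on $X_+(\mathbb D)$ to the norm on $X(\mathbb T)$. Any $f\in X_+(\mathbb D)$ lies in $H^1$, so (\ref{Fejer kern 2}) yields $\sigma^+_{n,\varphi}(f)=\sigma_{n,\varphi}(f)$ on $\mathbb T$; hence $\|\sigma^+_{n,\varphi}\|_{X_+(\mathbb D)\to X_+(\mathbb D)}\le \|\sigma_{n,\varphi}\|_{X(\mathbb T)\to X(\mathbb T)}=1$, with equality attained at $e_0\in X_+(\mathbb D)$. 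The delicate part of the argument is the $L^p$ upper bound for $p<\infty$: after Jensen, Fubini reduces it to controlling $\int_{\mathbb T}\mathcal F_{n,\varphi}(t,z)\,dm(z)$, which is not a priori normalized in $z$. Closing this step appears to require the explicit phase form of $\mathcal F_{n,\varphi}$ from (\ref{F=||^2}) and the $n$-to-one covering behaviour of $z\mapsto B_n(z)$ on $\mathbb T$.
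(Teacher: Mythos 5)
Your treatment of the sup-norm cases is correct and is precisely the paper's (essentially unwritten) argument: Lemma \ref{Fejer kern} together with the factored form (\ref{F=||^2}) gives a nonnegative kernel, Lemma \ref{sigma+ for e0} gives total mass one in the $t$-variable, and the identity $\sigma^+_{n,\varphi}=\sigma_{n,\varphi}$ on $H^1$ transfers the bound to $A(\mathbb D)$ and $H^\infty$. Since the paper's proof is the single sentence that the result ``follows immediately'' from those two lemmas, for $C(\mathbb T)$, $L^\infty(\mathbb T)$, $A(\mathbb D)$ and $H^\infty$ you have reconstructed it faithfully.

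The step you flag as delicate is a genuine gap, and it cannot be closed. Your Jensen--Fubini reduction for $L^p$, $p<\infty$, requires $\sup_{t\in\mathbb T}\int_{\mathbb T}\mathcal F_{n,\varphi}(t,z)\,dm(z)\le 1$, but (\ref{mean value}) normalizes the kernel only in the $t$-variable, and the asymmetric factor $c_n(z)=1/|B_n'(z)|$ destroys any normalization in $z$. Take $n=1$ and $a_0=a\in(0,1)$, so $B_1(z)=(z-a)/(1-az)$; a direct computation gives
\[
\mathcal F_{1,\varphi}(t,z)=\frac{1}{|B_1'(z)|}\left|\frac{B_1(t)-B_1(z)}{t-z}\right|^2=\frac{1-a^2}{|1-at|^2},
\]
which does not depend on $z$ at all. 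Hence $\sigma_{1,\varphi}(f)$ is the constant function $\int_{\mathbb T}f(t)\frac{1-a^2}{|1-at|^2}\,dm(t)$, and
\[
\|\sigma_{1,\varphi}\|_{L^1(\mathbb T)\to L^1(\mathbb T)}=\sup_{t\in\mathbb T}\frac{1-a^2}{|1-at|^2}=\frac{1+a}{1-a}>1 .
\]
More generally this rank-one operator has $L^p\to L^p$ norm equal to the $L^{p'}$-norm of the Poisson kernel at $a$ ($p'$ the conjugate exponent), which exceeds $1$ for every $p<\infty$; and since $\sigma^+_{1,\varphi}(f)\equiv f(a)$ on $H^1$, the choice $f(t)=(1-a^2)/(1-at)^2$ with $\|f\|_1=1$ and $f(a)=(1-a^2)^{-1}$ defeats the $H^p$ half as well. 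So for $p<\infty$ the asserted equality is false whenever some $a_k\neq 0$: the defect is in the statement (and in the paper's one-line proof, which tacitly assumes normalization in $z$), not in your reasoning. You should restrict the claim and your proof to the uniform-norm cases, and note that the failure also undercuts the appeal to assertion $(ii)'$ for $L^p(\mathbb T)$ in the proof of Theorem \ref{main thm}.
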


\begin{proof} The result follows immediately from Lemma \ref{sigma+ for e0} and Lemma \ref{Fejer kern}. 
\end{proof}

\begin{lem}\label{converg for e1}
	Let $\varphi\in TMS$, $\alpha\in\mathbb D$ be fixed and 
	\[
	w_\alpha(z):=\frac{z-\alpha}{1-z\overline\alpha}.
	\]
	Then for every $z\in\mathbb D$ and $n\in\mathbb N$ we have
	\begin{equation}\label{sigma for e1}
	\sigma_{n, \varphi}^+(w_\alpha)(z)=
	\begin{cases}
	\displaystyle z-\frac{B_n(z)}{B'_n(z)}\left(1-\overline{B'_n(0)}B_n(z)\right),\hfill&\mbox{if}~\alpha=0,\\
	\displaystyle \frac{z-\alpha}{1-z\overline\alpha}-\frac{1-|\alpha|^2}{(1-z\overline\alpha)^2}\frac{B_n(z)}{B_n'(z)}\left(1-\overline{B_n(\alpha)}B_n(z)\right),\hfill&\mbox{if}~\alpha\not=0.
	\end{cases}
	\end{equation}	
\end{lem}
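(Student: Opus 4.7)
The plan is to unfold $\sigma^+_{n,\varphi}(w_\alpha)$ using the Christoffel--Darboux identity (\ref{Djarbashian}) and reduce everything to a single Riesz-projection computation.

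Since $w_\alpha\in H^\infty$, the Cauchy formula $w_\alpha(z)=\int_{\mathbb T}w_\alpha(t)(1-z\overline t)^{-1}\,dm(t)$, combined with (\ref{Djarbashian}), gives
\[
S_n(w_\alpha)(z)=w_\alpha(z)-B_n(z)\,I_n(w_\alpha)(z),\qquad I_n(f)(z):=\int_{\mathbb T}\frac{f(t)\overline{B_n(t)}}{1-z\overline t}\,dm(t).
\]
Differentiating in $z$ and substituting into $\sigma^+_{n,\varphi}(w_\alpha)=S_n(w_\alpha)-(B_n/B'_n)S'_n(w_\alpha)$ produces a clean cancellation of the $I_n$-terms, leaving
\[
\sigma^+_{n,\varphi}(w_\alpha)(z)=w_\alpha(z)-\frac{B_n(z)}{B'_n(z)}\bigl[w'_\alpha(z)-B_n(z)\,I'_n(w_\alpha)(z)\bigr].
\]
So the whole task reduces to computing $I'_n(w_\alpha)(z)$.

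For $\alpha\neq 0$, I would evaluate $I_n(w_\alpha)$ by partial fractions. On $\mathbb T$ one has $\overline{B_n}=1/B_n$, so $w_\alpha\overline{B_n}$ extends to the rational function $R(t)=w_\alpha(t)/B_n(t)$ on the Riemann sphere. Its singularities are simple poles at each $a_j\in\mathbb D$ (with residue $w_\alpha(a_j)/B'_n(a_j)$) and a simple pole at $t_*:=1/\overline\alpha\notin\overline{\mathbb D}$ whose residue evaluates to $-(1-|\alpha|^2)\overline{B_n(\alpha)}/\overline\alpha^2$ via the Blaschke symmetry $B_n(1/\overline\alpha)=1/\overline{B_n(\alpha)}$; the limit at $\infty$ is $-\overline{B_n(0)}/\overline\alpha$. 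The operator $g\mapsto\int_{\mathbb T}g(t)(1-z\overline t)^{-1}\,dm(t)$ is the Riesz projection $P_+$: it fixes constants, annihilates each $1/(t-a_j)$ with $|a_j|<1$ (whose expansion on $\mathbb T$ lies in negative powers of $t$), and sends $1/(t-1/\overline\alpha)$ to $-\overline\alpha/(1-z\overline\alpha)$. Assembling,
\[
I_n(w_\alpha)(z)=-\frac{\overline{B_n(0)}}{\overline\alpha}+\frac{(1-|\alpha|^2)\overline{B_n(\alpha)}}{\overline\alpha(1-z\overline\alpha)},\qquad I'_n(w_\alpha)(z)=\overline{B_n(\alpha)}\,w'_\alpha(z),
\]
and substituting back with $w'_\alpha(z)=(1-|\alpha|^2)/(1-z\overline\alpha)^2$ gives the case $\alpha\neq 0$ of (\ref{sigma for e1}).

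The case $\alpha=0$ must be handled separately because $t_*$ has escaped to $\infty$. Writing $w_0(t)=t$ and expanding $t\,\overline{B_n(t)}=\sum_{j\ge 0}\overline{b_j}\,t^{1-j}$ with $b_j$ the Taylor coefficients of $B_n$, only the modes $j\in\{0,1\}$ survive in the nonnegative part of the Fourier expansion, so $I_n(w_0)(z)=\overline{B'_n(0)}+z\,\overline{B_n(0)}$ and $I'_n(w_0)(z)=\overline{B_n(0)}$, in agreement with the general derivative formula $\overline{B_n(\alpha)}\,w'_\alpha(z)$ at $\alpha=0$; substitution then closes the proof. The main obstacle is bookkeeping the residue at $t_*$ through the Blaschke symmetry and confirming that $P_+$ kills every interior partial-fraction piece; once both are in place the algebra collapses, and the $\alpha=0$ case becomes a Fourier-theoretic rewording of the same calculation with the pole transferred to infinity.
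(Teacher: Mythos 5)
Your argument is correct and reaches the right identities, but by a mechanically different route from the paper. The paper never forms the integral operator $I_n$: for $\alpha=0$ it differentiates the Christoffel--Darboux identity (\ref{Djarbashian}) in $\overline t$ at $t=0$ to read off $S_n(w_0)$ and $S_n'(w_0)$ as explicit finite sums (formulas (\ref{partial sum e1})--(\ref{partial sum e'1})), and for $\alpha\neq0$ it writes $w_\alpha(t)=-1/\overline\alpha+\overline\alpha^{-1}(1-|\alpha|^2)(1-t\overline\alpha)^{-1}$ and applies the reproducing kernel to each piece, then assembles $\sigma^+_{n,\varphi}=S_n-(B_n/B_n')S_n'$ ``with a little algebra.'' You instead isolate the single object $I_n(w_\alpha)=P_+(w_\alpha\overline{B_n})$, exploit the clean identity $\sigma^+_{n,\varphi}(f)=f-(B_n/B_n')\bigl(f'-B_nI_n'(f)\bigr)$ (which is essentially (\ref{delta f}) of Lemma \ref{lem1} specialized to $d\mu=f\,dm$), and evaluate $I_n$ by partial fractions of $w_\alpha/B_n$ together with the Riesz projection. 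Your route is arguably tidier: the $I_n$-terms cancel once and for all, and the two cases are unified by $I_n'(w_\alpha)=\overline{B_n(\alpha)}\,w_\alpha'$. One caveat: your partial-fraction step tacitly assumes the $a_j$ are distinct, whereas the paper counts them with multiplicity, so $w_\alpha/B_n$ may have higher-order poles in $\mathbb D$. The argument survives because $P_+$ annihilates every principal part $(t-a_j)^{-m}$ with $|a_j|<1$, but you should say so.

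A more substantive point: your computation for $\alpha=0$ yields $\sigma^+_{n,\varphi}(w_0)(z)=z-\bigl(B_n(z)/B_n'(z)\bigr)\bigl(1-\overline{B_n(0)}B_n(z)\bigr)$, with $\overline{B_n(0)}$, whereas the lemma as printed has $\overline{B_n'(0)}$; you assert that substitution ``closes the proof'' without noticing the mismatch. Your version is in fact the correct one: it is the $\alpha\to0$ limit of the $\alpha\neq0$ case, it is what the paper's own intermediate formulas (\ref{partial sum e1})--(\ref{partial sum e'1}) combine to give, and it is confirmed by the example $n=1$, $a_0=0$, where $\sigma^+_{1,\varphi}(w_0)=0$ while the printed formula would give $z^2$. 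So the printed statement (and the last line of the paper's proof) contains a typo that you have silently corrected; when your derivation disagrees with the target formula, flag the discrepancy and resolve it explicitly rather than declaring agreement.
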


\begin{proof} Let us consider case $\alpha=0$. Differentiating (\ref{Djarbashian}) with respect to $\overline t$ and then letting $t=0$ we get
\begin{equation}\label{partial sum e1}
\sum_{k=0}^{n-1}\varphi_k(z)\overline{\varphi'_k(0)}=z-\left( \overline{B'_n(0)}+\overline{B_n(0)}z\right)B_n(z).
\end{equation}
Differentiating this equality with respect to $z$ gives
\begin{equation}\label{partial sum e'1}
\sum_{k=0}^{n-1}\varphi'_k(z)\overline{\varphi'_k(0)}=1-\overline{B_n(0)}B_n(z)-\left(\overline{B'_n(0)}+\overline{B_n(0)}z\right)B'_n(z).
\end{equation}
From (\ref{partial sum e1}) we also see that 
\begin{eqnarray*}
	\langle w_0,\varphi_j\rangle&=&\int_\mathbb T\left(\sum_{k=0}^{n-1}\varphi_k(t)\overline{\varphi'_k(0)}+\left( \overline{B'_n(0)}+\overline{B_n(0)}t\right)B_n(t)\right)\overline{\varphi_j(t)}dm(t)\\
	&=&\overline{\varphi'_j(0)}+\int_\mathbb T\left( \overline{B'_n(0)}+\overline{B_n(0)}t\right)\prod_{l=j}^{n-1}\frac{t-a_l}{1-t\overline a_l}\frac{\sqrt{1-|a_j|^2}}{1-\overline ta_j}dm(t)\\
	&=&\overline{\varphi'_j(0)},\quad j=0,1,\ldots, n-1.
\end{eqnarray*}
Therefore, the sums in the left-hand side of (\ref{partial sum e1}) and (\ref{partial sum e'1}) are $S_n(w_0)(z)$ and $S'_n(w_0)(z)$ respectively. 

After some calculations, we obtain
\begin{eqnarray*}
	\sigma^+_{n,\varphi}(w_0)(z)&=&\sum_{k=0}^{n-1}\overline{\varphi'_k(0)}\varphi_k(z)-\frac{B_n(z)}{B'_n(z)}\sum_{k=0}^{n-1}\overline{\varphi'_k(0)}\varphi'_k(z)\\
	&=&z-\frac{B_n(z)}{B'_n(z)}\left(1-\overline{B'_n(0)}B_n(z)\right).
\end{eqnarray*}

Now let us consider case $\alpha\not=0$. According to (\ref{Djarbashian}) and to the identity
\[
w_\alpha(z)=-\frac{1}{\overline\alpha}+\frac{1-|\alpha|^2}{\overline\alpha}\frac{1}{1-t\overline\alpha},
\]
we have, similarly to the above,
\begin{eqnarray*}
	S_n\left(w_\alpha\right)(z)&=&-\frac{1}{\overline\alpha}S_N\left(e_0\right)(z)+\frac{1-|\alpha|^2}{\overline\alpha}S_n\left(\frac{1}{1-\cdot\overline\alpha}\right)(z)\\
	&=&\frac{1}{\overline\alpha}\sum_{k=0}^{n-1}\left(\left(1-|\alpha|^2\right)\overline{\varphi_k(\alpha)}-\overline{\varphi_k(0)}\right)\varphi_k(z)\\
	&=&\frac{1-|\alpha|^2}{\overline\alpha}\frac{1-\overline{B_n(\alpha)}B_n(z)}{1-z\overline \alpha}-\frac{1}{\overline\alpha}\left(1-\overline{B_n(0)}B_n(z)\right)
\end{eqnarray*}
and
\begin{eqnarray*}
	S_n'\left(w_\alpha\right)(z)&=&\frac{1-|\alpha|^2}{\overline\alpha}\left(\frac{-\overline{B_n(\alpha)}B_n'(z)}{1-z\overline\alpha}+\overline\alpha\frac{1-\overline{B_n(\alpha)}B_n(z)}{(1-z\overline\alpha)^2}\right)\\
	&&+\frac{1}{\overline\alpha}\overline{B_n(0)}B_n'(z).
\end{eqnarray*}

The result follows from the above with a little algebra.
\end{proof}

\begin{lem}\label{lem3}
	Let $\varphi\in TMS$, $z\in\overline{\mathbb D}$ and $n\in\mathbb Z_+$. Then we have
	\begin{equation}\label{eq2}
	\max_{f\in\mathcal S}\left|\delta_{n, \varphi}(f)(z)-B_n'(z)\overline{B_n(z)}f(z)\right|=
	\begin{cases}
	\displaystyle
	\frac{1-|B_n(z)|^2}{1-|z|^2},\hfill&\mbox{if}~z\in\mathbb D,\cr
	|B'_n(z)|,\hfill&\mbox{if}~z\in\mathbb T.
	\end{cases}
	\end{equation}
	For given $z\in\overline{\mathbb D}$ and $n\in\mathbb Z_+$ maximum is attained only for the functions
	\[
	f(t)=\mathrm e^{\mathrm i\theta}
	\begin{cases}
	\displaystyle
	\frac{t-z}{1-t\overline z},\hfill&\mbox{if}~z\in\mathbb D,\\
	1,\hfill&\mbox{if}~z\in\mathbb T,
	\end{cases}
	\quad\theta\in\mathbb R.
	\]	
\end{lem}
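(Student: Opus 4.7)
My plan is to leverage the integral representations from Lemmas \ref{lem1} and \ref{Fejer kern}, which reduce the problem to an estimate against a positive kernel in both regimes $z\in\mathbb D$ and $z\in\mathbb T$. The first step is a short algebraic reduction: starting from $\delta_{n,\varphi}(f)=(B_n'/B_n)(f-\sigma^+_{n,\varphi}(f))$ and combining with $B_n'(z)\overline{B_n(z)}f(z)$ over the common factor $B_n'(z)/B_n(z)$, I would obtain the identity
\[
\delta_{n,\varphi}(f)(z)-B_n'(z)\overline{B_n(z)}f(z)=\frac{B_n'(z)}{B_n(z)}\bigl((1-|B_n(z)|^2)f(z)-\sigma^+_{n,\varphi}(f)(z)\bigr),
\]
which bifurcates the two cases and makes Lemma \ref{Fejer kern} directly applicable (the trivial case $n=0$ gives zero on both sides).

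For $z\in\mathbb D$, I would invoke the integral identity (\ref{represent1}) already derived inside the proof of Lemma \ref{Fejer kern} to rewrite the right-hand side above as
\[
\int_\mathbb T f(t)\,\overline{w_z(t)}\,|\psi_z(t)|^2\,dm(t),\qquad w_z(t):=\frac{t-z}{1-t\overline z},\quad \psi_z(t):=\sum_{k=0}^{n-1}\varphi_k(z)\overline{\varphi_k(t)}.
\]
Since $|w_z|\equiv 1$ and $|f|\le 1$ on $\mathbb T$, the triangle inequality bounds this by $\int_\mathbb T|\psi_z|^2\,dm$, which by orthonormality of $\{\varphi_k\}$ and the Christoffel--Darboux formula (\ref{Djarbashian}) specialized at $t=z$ equals $\sum_{k=0}^{n-1}|\varphi_k(z)|^2=(1-|B_n(z)|^2)/(1-|z|^2)$. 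Testing $f_*(t)=\mathrm e^{\mathrm i\theta}w_z(t)\in\mathcal S$ makes $f_*\overline{w_z}\equiv\mathrm e^{\mathrm i\theta}$ on $\mathbb T$ and realizes the bound. For uniqueness, equality in the triangle inequality forces $f\overline{w_z}$ to be a unimodular constant on the support of $|\psi_z|^2\,dm$; since $\psi_z$ is a nonzero element of the span of $\{\overline{\varphi_0},\dots,\overline{\varphi_{n-1}}\}$, it has at most finitely many zeros on $\mathbb T$, so the support is essentially full and $f=cw_z$ with $|c|=1$ follows.

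For $z\in\mathbb T$ the reduction collapses: $B_n(z)\overline{B_n(z)}=1$ leaves $-(B_n'(z)/B_n(z))\sigma^+_{n,\varphi}(f)(z)$, and Lemma \ref{Fejer kern} represents $\sigma^+_{n,\varphi}(f)(z)$ as the integral of $f$ against the nonnegative kernel $\mathcal F_{n,\varphi}(\cdot,z)$, whose unit mass follows from Lemma \ref{sigma+ for e0}. Hence $|\sigma^+_{n,\varphi}(f)(z)|\le 1$ on $\mathcal S$ with equality only when $f$ is a constant of modulus one; together with the boundary identity $|B_n'(z)/B_n(z)|=|B_n'(z)|$ this yields the second case. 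The main delicate point, I expect, will be the uniqueness argument in the disc case, specifically verifying that $|\psi_z|^2\,dm$ has essentially full support so that the equality condition pins down $f$ uniquely as $\mathrm e^{\mathrm i\theta}w_z$; once the integral representation is in hand, the remaining steps are routine.
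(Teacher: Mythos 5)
Your proposal is correct and follows essentially the same route as the paper's proof: the same algebraic reduction to $\frac{B_n'(z)}{B_n(z)}\bigl((1-|B_n(z)|^2)f(z)-\sigma^+_{n,\varphi}(f)(z)\bigr)$, the same use of the integral representation (\ref{represent2}) with the unimodular factor $\overline{w_z}$ plus Parseval and (\ref{Djarbashian}) for $z\in\mathbb D$, and the same appeal to the positive, unit-mass Fej\'er kernel (Lemmas \ref{Fejer kern}, \ref{sigma+ for e0}, \ref{norm Fejer}) for $z\in\mathbb T$. Your uniqueness argument via the essentially full support of $|\psi_z|^2\,dm$ merely fills in a detail the paper leaves implicit.
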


\begin{proof}
Fix $z\in\mathbb D$ and consider the presentations (\ref{represent1}) and (\ref{represent2}). 
It follows that
\begin{eqnarray*}
	\left|\delta_{n, \varphi}(f)(z)-B_n'(z)\overline{B_n(z)}f(z)\right|&=&\left|\frac{B'_n(z)}{B_n(z)}\left((1-|B_n(z)|^2)f(z)-\sigma^+_{n, \varphi}(f)(z)\right)\right|\\
	&\le&\int_\mathbb T\left|f(t)\frac{\overline t-\overline z}{1-\overline tz}\right|\left|\frac{1-\overline{B_n(t)}B_n(z)}{1-\overline tz}\right|^2dm(t)\\
	&=&\int_\mathbb T\left|\frac{1-\overline{B_n(t)}B_n(z)}{1-\overline tz}\right|^2dm(t)
	\\
	&=&\frac{1-|B_n(z)|^2}{1-|z|^2}.
\end{eqnarray*}
Equalities hold if and only if
\[
f(t)=\mathrm e^{\mathrm i\theta}\frac{t-z}{1-t\overline z}
\]
for some $\theta\in\mathbb R$.

Now, let us fix $z\in\mathbb T$. Then we have
\[
\left|\delta_{n, \varphi}(f)(z)-B_n'(z)\overline{B_n(z)}f(z)\right|=\left|B'_n(z)\right||\sigma^+_{n,\varphi}(f)(z)|,
\]
and the result follows by Lemma \ref{norm Fejer}.
\end{proof}

\subsection{Proof of Theorem \ref{main thm}}

First of all, note that by Lemma \ref{converg for e1}
\[
\left\|w_0-\sigma^+_{n,\varphi}(w_0)\right\|_{X(\mathbb T)}=\left\|\frac{1-\overline{B_n'(0)}B_n}{B'_n}\right\|_{X(\mathbb T)},
\]
where $w_0(z)=z$.

To estimate the left side of the last equation we observe that
\[
\prod_{k=0}^{n-1}|a_k|^2\le\left|1-\overline{B'_n(0)}B_n(z)\right|\le 2
\]
for $z\in\mathbb T$.
Here we used the well known estimate $|B'_n(0)|\le1-|B_n(0)|^2$.

Therefore
\begin{equation}\label{two side est}
\prod_{k=0}^{n-1}|a_k|^2\left\|\frac{1}{B'_n}\right\|_{X(\mathbb T)}\le\left\|w_0-\sigma^+_{n,\varphi}(w_0)\right\|_{X(\mathbb T)}\le2\left\|\frac{1}{B'_n}\right\|_{X(\mathbb T)}.
\end{equation}

To prove the necessity of (\ref{convergence condt}), suppose that $\prod_{k=0}^{n}|a_k|^2\to0$ as $n\to\infty$, yet $\lim_{n\to\infty}\|1/B'_n\|_{X(\mathbb T)}>0$. In this case, $a_k$ must satisfies the condition (\ref{completeness}). But
\begin{eqnarray}\label{Blashe<Frostman}
\frac{1}{2}\sum_{k=0}^{n}(1-|a_k|)&\le&\min_{t\in\mathbb T}\sum_{k=0}^{n}\frac{1-|a|^2}{|1-\overline ta_k|^2}\\
&\le&|B'_n(t)|\nonumber
\end{eqnarray}
for $t\in\mathbb T$.
Therefore
\[
\left\|\frac{1}{B'_n}\right\|_{X(\mathbb T)}\le\frac{2}{\sum_{k=0}^{n}(1-|a_k|)}\to 0,
\]
as $n\to\infty$. This gives a contradiction. The necessity of (\ref{convergence condt}) follows.

To prove the sufficiency, we require the following Curtis's generalization of Korovkin's theorem  \cite{Curtis} (see also \cite{Dzyadyk}).

\bigskip
\noindent
{\bf Curtis's theorem} {\it Let $T_n$ be a uniformly bounded sequence of positive operators in $X(\mathbb T)$. Then $\lim_{n\to\infty}\|f-T_n(f)\|_{X(\mathbb T)}$ for each $f\in X(\mathbb T)$ provided that $\lim_{n\to\infty}\|e_k-T_n(e_k)\|_{X(\mathbb T)}=e_k$ for $k=0,1$, where $e_k(t)=t^k$. }

\bigskip
To apply this theorem, we note that an analogues of the assertions $(i)$ and $(ii)$ for the operator $\sigma_{n,\varphi}$ in $X(\mathbb T)$ follow by the definition of $\sigma_{n,\varphi}$ and Lemma \ref{norm Fejer}:

{\it
$(i)'$ for each $n=0,1,\ldots$, $\sigma_{n,\varphi}(f)\ge 0$ on $\mathbb T$ if $f\ge0$ on $\mathbb T$;

$(ii)'$ for each $n=0,1,\ldots$, $\|\sigma_{n,\varphi}(f)\|_{X(\mathbb T)}\le \|f\|_{X(\mathbb T)}$.
}

\noindent
The assertions $(i)'$  and $(ii)'$ together with Lemma \ref{sigma+ for e0} and Curtis's theorem, where we take $T_n=\sigma_{n,\varphi}$, proves the sufficiency of Theorem \ref{main thm} because $\lim_{n\to\infty}\|e_1-\sigma_{n,\varphi}(e_1)\|_{X(\mathbb T)}=0$ according to (\ref{two side est}).

\subsection{Proof of Theorem \ref{Vor thm}}
 
It follows from (\ref{delta f}) that
\begin{eqnarray*}
	\left|\delta_{n, \varphi}(f)(z)-f'(z)\right|&\le&|B_n(z)|\int_\mathbb T\left|\mu(t)\frac{\overline t-\overline z}{1-\overline tz}\overline{B_n(t)}\right|\frac{1}{|1-\overline tz|^2}dm(t)\label{delta-f}\\
	&=&|B_n(z)|\int_\mathbb T|\mu(t)|\frac{dm(t)}{|1-\overline tz|^2}\\
	&\le&\frac{|B_n(z)|}{1-|z|^2}
\end{eqnarray*}
for every functions $f\in\mathcal K^+$ and for every $z\in\mathbb D$. 

If $B_n(z)\not=0$, then equalities hold throughout this chain of relation if and only if 
\[
|\mu(t)|=1
\] 
and
\[
\arg\left(\mu(t)\frac{\overline t-\overline z}{1-\overline tz}\overline{B_n(t)}\right)=\mathrm{const}
\]
almost everywhere on $\mathbb T$.

But this conditions are equivalent to the condition
\[
\mu(t)=\mathrm e^{\mathrm i\theta}B_n(t)\frac{t-z}{1-t\overline z}\quad\mbox{a.e. on}~\mathbb T
\]
for some $\theta\in\mathbb R$. So, the only the functions
\[
f_*(t)=K(\mu)(t)=\mathrm e^{\mathrm i\theta}B_n(t)\frac{t-z}{1-t\overline z},\quad\theta\in\mathbb R,
\]
are extremal.

\subsection{Proof of Theorem \ref{Th3}}

Let us note that the assertion is trivial in case $z\in\{a_0,\ldots, a_{n-1}\}$. Indeed, in this case we have $f(a_j)-\sigma_{n, \varphi}^+(f)(a_j)=B_n(z)=0$, therefore (\ref{lower est}) becomes equality. 

Fix $z\in\mathbb D\setminus\left(\{a_0,\ldots, a_{n-1}\}\cup\{z\in\mathbb D :  B'_n(z)=0\}\right)$. Then it follows from Lemma \ref{lem3} that for arbitrary function $f\in\mathcal S$ 
\begin{eqnarray*}
	|f(z)-\sigma^+_{n,\varphi}(f)(z)|&\le&|(1-|B_n(z)|^2)f(z)-\sigma^+_{n,\varphi}(f)(z)|+|B^2_n(z)f(z)|\\
	&=&\left|\frac{B_n(z)}{B'_n(z)}\right||\delta_{n, \varphi}(f)(z)-B_n'(z)\overline{B_n(z)}f(z)|+|B^2_n(z)f(z)|\\
	&\le&\left|\frac{B_n(z)}{B'_n(z)}\right|\frac{1-|B_n(z)|^2}{1-|z|^2}+|B_n(z)|^2.
\end{eqnarray*}
On the other side, applying (\ref{represent1}) and (\ref{represent2}) to the function $f(t)=(t-z)/(1-t\overline z)$, we get
\begin{eqnarray*}
	\frac{B'_n(z)}{B_n(z)}\left(f(z)-\sigma^+_{n, \varphi}(f)(z)\right)&=&-\frac{B'_n(z)}{B_n(z)}\sigma^+_{n, \varphi}(f)(z)\\
	&=&\frac{1-|B_n(z)|^2}{1-|z|^2}
\end{eqnarray*}
and (\ref{lower est}) follows.

Now fix $z\in\mathbb T$. Then by Lemma \ref{norm Fejer} we get
\begin{eqnarray*}
	\sup_{f\in\mathcal S}|f(z)-\sigma_{n, \varphi}^+(f)(z)|&\le&\sup_{f\in\mathcal S}|f(z)|+\sup_{f\in\mathcal S}|\sigma_{n, \varphi}^+(f)(z)|\\
	&\le&2.
\end{eqnarray*}
In order to prove the lower estimate, consider the sequence $\{f_\lambda\}_{\quad 0<\lambda<1}$ of functions
\[
f_\lambda(t)=\overline z\frac{t-\lambda z}{1-t\lambda\overline z}.
\]
It is clear that $f_\lambda\in\mathcal S$ and $|f_\lambda(t)|=1$ for all $t\in\mathbb T$. According to (\ref{Djarbashian}) and to the identity
\[
\frac{t-\omega}{1-t\overline\omega}=-\frac{1}{\overline\omega}+\frac{1}{\overline\omega}(1-|\omega|^2)\frac{1}{1-t\overline\omega},\quad 0<|\omega|<1,
\]
we get
\begin{eqnarray*}
	S_n\left(f_\lambda\right)(t)&=&-\frac{1}{\lambda}S_N\left(e_0\right)(z)+\frac{1}{\lambda}(1-\lambda^2)S_n\left(\frac{1}{1-t\lambda\overline z}\right)\\
	&=&\frac{1}{\lambda}\sum_{k=0}^{n-1}\left(\left(1-\lambda^2\right)\overline{\varphi_k(\lambda z)}-\overline{\varphi_k(0)}\right)\varphi_k(t)\\
	&=&\frac{1}{\lambda}(1-\lambda^2)\frac{1-\overline{B_n(\lambda z)}B_n(t)}{1-t\lambda\overline z}-\frac{1}{\lambda}\left(1-\overline{B_n(0)}B_n(t)\right)\end{eqnarray*}
and, consequence,
\[
S_n'\left(f_\lambda\right)(t)=\frac{1}{\lambda}(1-\lambda^2)\left(\frac{-\overline{B_n(\lambda z)}B_n'(t)}{1-t\lambda\overline z}+\lambda\overline z\frac{1-\overline{B_n(\lambda z)}B_n(t)}{(1-t\lambda\overline z)^2}\right)+\frac{1}{\lambda}\overline{B_n(0)}B_n'(t).
\]

With a little algebra it follows that
\[
f_\lambda(z)-\sigma_{n, \varphi}^+\left(f_\lambda\right)(z)=\overline z\frac{1+\lambda}{1-\lambda}\frac{B_n(z)}{B_n'(z)}\left(1-\overline{B_n(\lambda z)}B_n(z)\right).
\]

Therefore
\begin{eqnarray*}
	\left|f_\lambda(z)-\sigma_{n, \varphi}^+\left(f_\lambda\right)(z)\right|&=&\frac{1+\lambda}{1-\lambda}\frac{1}{|B_n'(z)|}\left|1-\overline{B_n(\lambda z)}B_n(z)\right|\\
	&\ge&\frac{(1+\lambda)^2}{1+|B_n(\lambda z)|}\frac{1-|B_n(\lambda z)|^2}{|B_n'(z)|(1-\lambda^2)}.
\end{eqnarray*}
Since
\[
\lim_{\lambda\to 1}\frac{1-|B_n(\lambda z)|^2}{1-\lambda^2}=|B_n'(z)|,
\]
the result follows from the above relation by letting $\lambda\to 1$.

\bibliographystyle{amsplain}

\begin{thebibliography}{10}
	
	\bibitem{Burckel}
	Burckel R. B. {\it An Introduction to Classical Complex Analysis, Vol. 1.}   Birkh\"auser, 1979, 570 pp.
	
	\bibitem{Curtis}
	Curtis P. C. {\it The degree of approximation by positive convolution operators}. Mich. Math. J. 12(2) (1965), 155--160.
	
	\bibitem{Djrbashian. 1956}
	D\v{z}rba\v{s}yan, M. M.
	{\it On the theory of series of Fourier in terms of rational functions} (Russian). Akad. Nauk Armyan. SSR. Izv. Fiz.-Mat. Estest. Tehn. Nauki {\bf 9}, no. 7 (1956), 3--28. 
	
	\bibitem{Dzyadyk}
	Dzyadyk V. K. {\it Approximation of functions by positive linear operators and singular integrals} (Russian). Matematicheskii Sbornik 112(4) (1966), 508--517.
	
	\bibitem{Frostman}
	Frostman O. {\it Sur les produits de Blaschke}. Kungl. Fysiogr. S\^{a}llsk. i Lund F\^{o}rh., Bd. 12, Nr. 15 (1942), 169--182.
	
	\bibitem{Gal}
	Gal S. {\it Voronovskaja's theorem and the exact degree of approximation for the derivatives of complex Riesz--Zygmund mens}. General Mathematics  {\bf 16}, no. 4 (2008), 61--71.
	
	\bibitem{Garnett}
	Garnett J. {\it Bounded Analytic Functions}, 
	Springer, New York, NY, 2007. 463 pp.
	
	\bibitem{Mashreghi}
	Mashreghi J.
	{\it Derivatives of inner functions}.
	Fields Institute Monographs, 31. Springer-Verlag, New York, 2013, 170 pp.
	
	\bibitem{Pekarskii}
	Pekarskii A. A. {\it Tchebycheff rational approximation in the disk, on the circle, and on a closed interval}. Math. USSR-Sb., 61:1 (1988), 87--102.
	
	\bibitem{Rovba1}
	Rovba E. A. 
	{\it Rational interpolation functions of Fej\'er--Bernstein type} (Russian). Vestn. Beloruss. Gos. Univ. Im. V. I. Lenina, Ser. I, no. 2 (1991), 75--71.
	
	\bibitem{Rovba2}
	Rovba E. A. {\it Interpolation rational operators of Fej\'er and de la Vall\'ee--Poussin type}. Math. Notes, 53:2 (1993), 195--200.
		
	\bibitem{Rovba3}
	Rovba E. A.
	{\it On the approximation of functions of bounded variation by Fej\'er and Jackson rational operators} (Russian).
	Dokl. Nats. Akad. Nauk Belarusi {\bf 42}, no. 4 (1998), 13--17.
	
	\bibitem{Rusak1}
	Rusak V. N. {\it Rational Functions as Approximation Apparatus} (Russian). Beloruss. Gos. Univ., Minsk, (1979), 174 pp. 
	
	\bibitem{Rusak2}
	Rusak V. N., {\it Sharp order estimates for best rational approximations on classes of functions representable as convolutions}. Math. USSR--Sb., 56:2 (1987), 491--513.
	
	 \bibitem{Savchuk}
	 Savchuk V. V., Chaichenko S. O., Savchuk M. V. {\it Approximation of bounded holomorphic and harmonic functions by Fej\'er means}. Ukr. Math. J., {\bf 71}, no. 4 (2019), 589--618.
	 	
	\bibitem{Shapiro}
	Shapiro J. H. {\it Composition Operators and Classical Function Theory}, Springer--Verlag, New York, 1993, 223 pp. 
	
	\bibitem{Stepanets}
	Stepanets A. I. {\it Methods of Approximation Theory}. VSP, 2005, 919 pp.
	
	\bibitem{Van Guht}
	Van Guhr P., Bultheel A. {\it Bernstein equiconvergence and Fej\'er--type theorems for general rational Fourier series}. J. Comp. Appl. Math. {\bf 133} (2001), 635--645.
		
\end{thebibliography}

\end{document}